\documentclass[12pt]{amsart}
\usepackage[sans]{dsfont}
\usepackage{amsfonts,amssymb,amsbsy,amsmath,amsthm,enumerate}

\topmargin -1cm
\textheight21.4cm
\textwidth15.7cm
\oddsidemargin 0.5cm
\evensidemargin 0.5cm
\parindent0cm

\numberwithin{equation}{section}

\newtheorem{theorem}{Theorem}[section]
\newtheorem{proposition}[theorem]{Proposition}
\newtheorem{lemma}[theorem]{Lemma}
\newtheorem{defi}[theorem]{Definition}
\newtheorem{corollary}[theorem]{Corollary}

\theoremstyle{definition}
\newtheorem{remark}[theorem]{Remark}

%%%%%%%%%%%%%%%%%%%%%%%%%%%%%%%%%%%% AFFICHER REFS 
%\usepackage[notref,notcite]{showkeys}
%%%%%%%%%%%%%%%%%%%%%%%%
\usepackage{amsfonts}
\usepackage{amsmath}
\usepackage{bigints}
\usepackage{bm}
\usepackage{dsfont}
\usepackage{color}

%%%%%%%%%%%%%%%%%%%%%%%%%%%%%%%%%%%%%%%%%%%%%%%%%%%%%%%%%%%%%%%%%AFFICHE REF
%\usepackage[notref,notcite]{showkeys}

%\usepackage{bbold}
 %\usepackage{bb}

%%%%%%%%%%% Commandes BruRai %%%%%%%%%%%%%%%%%%%%%%%%%%%%%% Colors  %%%%%%%%%
\definecolor{gr}{rgb}   {0.,   0.69,   0.23 }
%\definecolor{mg}{rgb}   {0.85,  0.,    0.85}
\newcommand{\Bk}{\color{black}}

%%%%%%%%%  Begin......  %%%%%%%%%%
\def\beq{\begin{equation}}
\def\eeq{\end{equation}}
\newcommand{\bea}{\begin{eqnarray}}
\newcommand{\eea}{\end{eqnarray}}
\newcommand{\beas}{\begin{eqnarray*}}
\newcommand{\eeas}{\end{eqnarray*}}

\newcommand{\bel}{\begin{equation} \label}
\newcommand{\ee}{\end{equation}}

\newcommand{\bethl}{\begin{theorem} \label}

\newcommand{\beprl}{\begin{proposition} \label}
\newcommand{\epr}{\end{proposition}}

\newcommand{\belel}{\begin{lemma} \label}
\newcommand{\ele}{\end{lemma}}

\newcommand{\becol}{\begin{corollary} \label}
\newcommand{\eco}{\end{corollary}}

\newcommand{\bepf}{\begin{proof}}
\newcommand{\epf}{\end{proof}}

%%%%%%%%%%% THE OPERATORS %%%%%%%
% Schrodinger operator

% Pauli operator

% Dirac operator

%%%%%%%%%% Mathematical objects %%%%%%%%%

%%%%%%%%%%%% unknown commands %%%%%%%%%%%%%%%%

%\newcommand{\bxi}{\boldsymbol{\xi}}

%\newcommand{\balpha}{\boldsymbol{\alpha}}

%\newcommand{\bbeta}{\boldsymbol{\beta}}

%\newcommand{\bheta}{\boldsymbol{\eta}}

%\newcommand{\bomega}{\boldsymbol{\omega}}

%\newcommand{\btheta}{\boldsymbol{\theta}}

%\newcommand{\bsigma}{\boldsymbol{\sigma}}

%\newcommand{\bzeta}{\boldsymbol{\zeta}}

\newcommand{\one}{\mathds{1}}
%\newcommand{\one}{\bf {1}}

%\newcommand{\lvert}{{\vert}}
%\newcommand{\rvert}{{\vert}}
%\newcommand{\lVert}{{\vert \vert}}
%\newcommand{\rVert}{{\vert \vert}}

%%%%%%%%%%%%%%%%%%%%%%%%%%%%%%%%%%%%%%

\newcommand{\capa}{{{\rm Cap}}}

\newcommand{\rd}{{\mathbb R}^{2}}

\newcommand{\re}{{\mathbb R}}

\newcommand{\R}{{\mathbb R}}

\newcommand{\C}{{\mathbb C}}

\newcommand{\N}{{\mathbb N}}
\newcommand{\Z}{{\mathbb Z}}

\newcommand{\gC}{{\mathfrak{C}}}

\newcommand{\gM}{{\mathfrak{M}}}

\newcommand{\cC}{{\mathcal C}}

\newcommand{\cE}{{\mathcal E}}

\newcommand{\cK}{{\mathcal K}}

\newcommand{\cV}{{\mathcal V}}

\begin{document}

%\title[2D Magnetic Dirac operator]{Spectrum of the perturbed Landau-Dirac operator %under a compactly supported  electric perturbation
%}
%\author{Vincent Bruneau and  Pablo Miranda}

\title[2D Magnetic Dirac operator]{Spectrum of the perturbed Landau-Dirac operator %under a compactly supported  electric perturbation
}
\author{Vincent Bruneau and  Pablo Miranda}

\begin{abstract}\setlength{\parindent}{0mm}
In this article, we consider  the Dirac operator  with constant magnetic field in $\R^2$. Its spectrum consists of eigenvalues of infinite multiplicities, known as  the Landau-Dirac levels.
  Under  compactly supported perturbations {of the electric and magnetic potentials}, we study the distribution of the discrete eigenvalues near each Landau-Dirac level.  Similarly to  the Landau (Schr\"odinger) operator, we demonstrate  that a  three-term asymptotic formula holds for  the eigenvalue counting function.  One of the main novelties of this work is the treatment of some perturbations  of variable sign. In this context  we explore some remarkable phenomena related to the finiteness or infiniteness of the discrete eigenvalues, which depend on the interplay of the different terms in the matrix perturbation.

\end{abstract}

\maketitle

{\centerline{\it Dedicated to the memory of Georgi Raikov.}

$$ $$

{\bf  AMS 2020 Mathematics Subject Classification:} 35P20,  81Q10\\

{\bf  Keywords:} Dirac operator, magnetic Hamiltonian, counting function of eigenvalues, logarithmic capacity

\section{The $2D$ Magnetic Landau-Dirac operators}

We consider in $L^2(\R^2)^2$ the Dirac operator with the homogeneous magnetic field $B=(0,0,b)$ pointing in the direction perpendicular to the plane, where $b>0$ is the amplitude.   More specifically, let $\sigma:=(\sigma_1,  \sigma_2)$, $ \sigma_3$ be the Pauli matrices,
 $A:=(A_1,A_2)=b(-\frac{ x_2}2, \frac{ x_1}2)$  a magnetic potential associated with the constant magnetic field  (curl$A=b$) and $m\geq 0$  the mass.
Then,  the {\it Landau-Dirac} operator $D_0$ is \Bk  the closure on  $C_0^\infty(\R^2)^2$  of %the operator   defined by
$$ \sigma \cdot (-i \nabla -A) + m \sigma_3= \left( \begin{array}{cc}  m & a^*  \\
a & - m
\end{array} \right), $$
 where 
\bel{defa}
a := (-i \partial_{x_1} -A_1) + i (-i \partial_{x_2} -A_2) , \qquad a^* := (-i \partial_{x_1} -A_1) - i (-i \partial_{x_2} -A_2). 
\ee
%$\alpha:=(\alpha_1,  \alpha_2, \alpha_3)$, $\beta$ the Dirac matrices.

 It is well known (see for instance \cite[Theorem 7.2]{Th92}) that the spectrum of $D_0$ is made up of eigenvalues of infinite multiplicities, the so-called {\it Landau-Dirac} Levels  
$$
\mu_{q}:=\begin{cases}
\sqrt{2bq+m^2},& q \in \{0,1,2,...\}\\[0.5em]
-\sqrt{2b|q|+m^2},&q \in \{-1,-2,... \}.
\end{cases}
$$
Thus the spectrum of $D_0$ is essential, given by
$\sigma(D_0) =  \sigma_{\text{ess}} (D_0)  = \{ \mu_{q} , \; q \in\Z \}$ and we have 
\bel{R0z}
(D_0-z)^{-1} =  \sum_{q \in \Z} (\mu_{q}-z)^{-1} {\mathcal P}_{q}   ,
\ee
where, for $q\in\Z$,
${\mathcal P}_{q}$ is  the  infinite dimensional orthogonal projection onto Ker$(D_0-\mu_{q})$.  These properties follow from analogous ones for the Landau-Schr\"odinger operator $H_L:= (-i \nabla -A)^2$ whose spectrum is given by the {\it Landau levels} $\Lambda_n = b(2n+1)$, $n \in \Z_+:=\{0,1,2,...\}$, by using  supersymmetry properties (see e.g. \cite[Theorem 5.13]{Th92}).

Let  $\mathcal V$ be a { Hermitian} matrix-valued perturbation, that is for $V_1,V_2$, and $W$  in $L^\infty(\R^2)$, 
${\cV}$ is the  operator of multiplication defined by 
\bel{defV}
\cV:=\begin{pmatrix}
	V_1&W^*\\
	W&V_2
\end{pmatrix} = \frac{V_1 + V_2}2 I_2 + \sigma_3 \frac{V_1 - V_2}2 - \sigma_1 \widetilde{A}_1 - \sigma_2 \widetilde{A}_2,   
\ee
{where $W =  -\widetilde{A}_1 -  i  \widetilde{A}_2$}.
Such matrix allows to consider scalar electric perturbations (with $V_1=V_2$, $W=0$), Lorentz potentials (with $V_1=-V_2$, $W=0$) and  perturbations by magnetic fields { $\widetilde{b}= \partial_{x_1} \widetilde{A}_2 - \partial_{x_2} \widetilde{A}_1$} associated with the magnetic potential  {$(\widetilde{A}_1, \widetilde{A}_2)$
%$= (- {\rm Re} W,  - {\rm Im} W )$
} (with $V_1=V_2=0$). %$W=-\widetilde{A}_1 -  i  \widetilde{A}_2$). For such $W$, the amplitude of the magnetic field becomes $b+\widetilde{b}$ with { $\widetilde{b} = \partial_{x_1} \widetilde{A}_2 -  \partial_{x_2} \widetilde{A}_1$.}
%{\rm Curl} \alpha$.(\widetilde{A}_1, \widetilde{A}_2):=

Now, for $V_1,V_2$, and $W$  tending to $0$ at infinity, ${\cV}$ is a relatively compact perturbation of  $D_0$. By   Weyl's theorem, the essential spectrum of the self-adjoint operator 
$D_{\cV}:= D_0 + \cV$ satisfies:
$$ 
 \sigma_{\text{ess}}(D_{\cV})=\sigma_{\text{ess}}(D_{0})= \{ \mu_{q} , \; q \in \Z \}.$$

Thus, under the  perturbation  $\cV$, discrete  eigenvalues may appear, with the only possible limit points at the  Landau-Dirac levels. 

In this paper we will focus on compactly supported perturbations ${\cV}$  and for each fixed $q\in \Z$, $0<\lambda$, we will consider the counting functions
 \bel{defNpm}
 {\mathcal N}_{+}^q(\lambda):={\rm Tr} E_{D_0+{\cV}}(\mu_q+\lambda,\alpha),\quad {\mathcal N}_{-}^q(\lambda):= {\rm Tr}E_{D_0+{\cV}}(\alpha,\mu_q-\lambda), 
 \ee
  where $E_T(\omega)$ is the spectral projection of the self-adjoint operator $T$  associated with the Borel set $\omega$, and $\alpha$ is any fixed number in $(\mu_{q},\mu_{q+1})$ and  $(\mu_{q-1},\mu_{q})$, respectively. These  functions  count the number of eigenvalues of $D_0+{\cV}$, including  multiplicities, on the intervals $(\mu_q+\lambda,\alpha)$ and $(\alpha,\mu_q-\lambda)$, respectively.

{
The asymptotic behavior of these functions was studied before in \cite{MeRo07} in the more restrictive case of ${\cV}$ with definite sign and of form ${\cV}=I_2 v$, with $I_2$ the identity $2\times 2$ matrix. In \cite[Theorem 1.3]{MeRo07} it is proved that}
%Concerning perturbations of compact support, in \cite[Theorem 1.3]{MeRo07} it was established that  for $V\geq0$  of the form $V=I_2 v$, with $I_2$ the identity $2\times 2$ matrix,   and $\pm v >0$ on a nonempty open set,  
 the eigenvalue counting functions satisfy
$${\mathcal N}_{\pm}^q(\lambda)=\frac{|\ln{\lambda}|}{ \ln{|\ln{\lambda}|}}(1+o(1)),\quad \lambda\downarrow 0.
$$ 
Analogous results for the Landau-Schr\"odinger operator $H_L +v$ in $\R^2$,  were obtained  in   \cite[Theorem 2.2]{RaWa02} and \cite[Theorem 1.2]{MeRo07}. 
In these works it is shown that for $H_L +v$ the asymptotic distribution of the eigenvalues near the Landau levels is governed by the Toeplitz operators $p_n  vp_n$  where $p_n $ is the orthogonal projection onto ${\rm Ker}(H_L-\Lambda_n)$, and by  ${\mathcal P}_{q} V  {\mathcal P}_{q}$ for $D_0+{\cV}$.

In \cite{FiPu06}, a different approach allows to describe the rate of accumulation of the  eigenvalues 
of $p_n v p_n$ 
in terms of the logarithmic capacity of $K$, the support of $v$. Then, this geometrical quantity appears in the third term of the asymptotic expansion of the eigenvalue counting function  of $H_L +v$. Similarly,  this result also holds for the Dirichlet, Neumann or Robin realization of $(-i \nabla -A)^2$ on $\R^2 \setminus K$, $K$ being a non-empty obstacle (i.e.  $K \subset \R^2$ is compact with a non-empty interior). The Dirichlet  case is like $H_L + \chi_K$ (see \cite{PuRo07}) while the obstacle problem for Neumann or Robin boundary conditions behaves like $H_L - \chi_K$ (see \cite{Pe09, GoKaPe16}). 
Here and throughout the article, $\chi_K $ denotes the characteristic function of the set $K$.
For the perturbation of $H_L$ by a delta interaction  supported on a curve $\Gamma  \subset \R^2$,  the logarithmic capacity of $\Gamma$ also appears in the asymptotic distribution of the eigenvalues near the Landau levels (see \cite{BeExHoLo20}).  In all mentioned works a fixed sign is assumed for the perturbation.

A natural question concerning the perturbation of the Landau-Dirac operator is the existence of analogous results on the
distribution of eigenvalues near $\mu_{q}$, $q \in \Z$. 
Moreover, we would like  also to be able to consider potentials of no-definite sign (in particular to take into account the Lorentz potential, certain magnetic perturbations as well as for the purpose of studying obstacle problems). {Some results for non-definite sign perturbations of the Landau-Schr\"odinger operators  are considered in  \cite{PuRo11} (for electric potentials of compact support) and in \cite{RoTa08}  {  (with smooth compactly supported magnetic fields under a sign condition on an effective potential)}. However, it is important to notice  that in order to use ideas from  these works in our context, the first difficulty  to overcome  is the non-semiboundedness of the Dirac operator.}

In this article, we obtain a three-term asymptotic expansion of  ${\mathcal N}_{\pm}^q(\lambda)$     for compactly supported perturbations of $D_0$ that  are not of definite sign (see \eqref{eqthm1}).  To achieve this, we first {obtain some general}  results concerning the index of a pair of spectral projections for non-semibounded operators (see the Appendix). Using the concept of spectral flow, this index helps to generalize a version of the  Birman-Schwinger principle to {non-sign-definite} perturbations 
 (see e.g. \cite{Pu09}). Then, it reduces the study of the counting functions \eqref{defNpm} to the analysis of the distribution of the eigenvalues of some Toeplitz operators ${\mathcal P}_{q} {\cV}^\pm_\epsilon  {\mathcal P}_{q} $ with $ {\cV}^\pm_\epsilon = {\cV}  + O(\epsilon)  $ for  $\epsilon>0$ small enough (see Proposition \ref{Diag}).  { The spectral study of these operators is based on complex analysis tools developed in \cite{FiPu06} and exploits \cite{PuRo11}}.

The main asymptotic expansions obtained are analogous to those  given  for fixed sign  perturbations  of the Landau-Schr\"odinger operator. However, a surprising result is that {when the support of $V_1$, $V_2$ and $W$ coincide, only $V_1$ determines the side of the accumulation for the eigenvalues} %the predominance of $V_1$ over  $V_2$ { and $W$} in  determining  the spectral asymptotics 
(see in particular Theorem \ref{thm1} as well as Remarks \ref{rmqV1} and \ref{rmqV1+}). {We show that this predominance can be attenuated and compensated by $V_2$ when the support of $V_2$ is not { contained} in the support of $V_1$ 
(see Theorem \ref{theo1_b}). %In these results the assumptions on the support of $W$ are made such that the influence of the magnetic contribution - $W$ - is always negligible. 
For smooth potentials, near each $\mu_q$, $q \in \Z$, we give an effective Toeplitz operator $p_0 v_q({\cV}) p_0$ with $v_q({\cV})$ a function given by a linear combinations of $V_1$, $V_2$, $\widetilde{b} = \partial_{x_1} \widetilde{A}_2 -  \partial_{x_2} \widetilde{A}_1$ and their derivatives until the order $2 |q|$ (see Proposition \ref{vqeff}). 
{ In contrast to \cite{RoTa08}, we have assumptions on the support of the magnetic potential, which is not very satisfactory because the physically relevant quantity is the magnetic field. However, our effective operator is expressed in terms of the electric potential and the magnetic field, and we can hope that the approach of \cite{RoTa08} allows to reduce the assumptions in terms of the magnetic field (independently of the choice of gauge).} Unlike our previous results, with this type of effective potential it is difficult to {express in a simple manner} the influence of the different { components} of ${\cV}$ {over the asymptotics of  ${\mathcal N}_{\pm}^q$}. However, this allows us to establish another distinctive feature: it is possible to have very different { patterns of} behavior between two successive Landau-Dirac levels (see {Corollary} \ref{prop_inf}).
%and to have a type of Zeeman effect for purely magnetic perturbations (see {Corollary} \ref{cormag}).

\section{Results}\label{s2}

 As we already mentioned, the asymptotic distribution of discrete eigenvalues of $D_0+{\cV}$ is related to a particular geometric feature of the support of ${\cV}$. In order to capture this properties we first need the following  definitions taken from potential theory.
 
 \subsection{Logarithmic capacity} Let $\cE \subset \rd$  be a Borel set, and $\gM(\cE)$ denote the set of compactly supported probability measures on $\cE$. Then  the  {\em logarithmic capacity} of $\cE$  is defined as
$\capa(\cE) : = e^{-{\mathcal I}(\cE)}$ where
$$
{\mathcal I}(\cE) : = \inf_{\mu \in \gM(\cE)}  \int_{\cE \times \cE} \ln{|x-y|^{-1}} d\mu(x) d\mu(y),
$$
{(if ${\mathcal I}(\cE)= + \infty$, we set $\capa(\cE) =0$).}
A systematic exposition of the theory of the logarithmic capacity can be found, for example, in \cite[Chapter 5]{Ran95} and \cite[Chapter II, Section 4]{La72}. 
If we identify $\cE$ with a subset of $\C$ (with $z=x_1+ i x_2$ for $(x_1,x_2) \in \cE$), for bounded sets, the  logarithmic capacity coincides with the  {\em transfinite diameter}
$${\rm Cap} (\cE) = \lim_{n \to \infty} \delta_n(\cE)$$ where
$$\delta_n(\cE) := \max_{z_1, \cdots, z_n \in \cE} \Big( \prod_{1 \leq i <j\leq n} |z_i - z_j| \Big)^{\frac2{n(n-1)}}.
$$
It is also given by the $n$'th root asymptotic of the $n$'th {\em Chebyshev polynomial} for the set $\cE$:
$${\rm Cap} (\cE) = \lim_{n \to \infty} \Big( \sup_{z \in \cE} | f_n(z)|\Big)^{\frac1{n}}$$
where $f_n$ is {the} monic polynomial {that minimizes}  $\sup_{z \in \cE} | f(z)|$ among all the  monic polynomials of degree $n$
(see for instance \cite{StTo92, Sa10}).

\subsection{Weights for open sets}  For an open set  $\Omega$  in $\R^2$ we  will say that a { measurable} function $w:\Omega\to [0,\infty)$ is a {\em weight } in $\Omega$ if the weighted Sobolev space { $$H^1(\Omega;w):= \{ f \in L^2(\Omega;w) ; \; \nabla f \in L^2(\Omega;w) \}$$} is a Hilbert space and the embedding 
\bel{embe}
H^1(\Omega;w)\hookrightarrow L^2(\Omega;w)
\ee
is compact {(here $L^2(\Omega;w)$ is the weighted $L^2$-space with the positive measure $wdx$}). There exists a vast literature concerning weighted Sobolev spaces and conditions for the compactness of the embedding
\eqref{embe} (see \cite{GuOp91,An03,GoUk09} and references therein). Of course any constant positive function
%the characteristic function 
is a weight over $\Omega$ if, for instance, $\Omega$ has a Lipschitz boundary. Another  simple example is the following: suppose that $\Omega$ is a bounded domain with  smooth  boundary $\partial \Omega$.  Then a positive function of the form 
$$
w(x)=|{\rm dist}(x,\partial \Omega)|^\alpha
$$
{near $\partial \Omega$}, is a weight for $\Omega$ if $\alpha\geq1$ (see for instance \cite{An03}). It is also possible to consider the distance to a part of the boundary (as in \cite[Section 2.1]{Ca08}).  For the compactness of \eqref{embe}, much less restrictive  conditions on the regulatity of $\Omega$ and $w$  appear in the aforementioned literature.

{As we will see later, the concept of weight allows us to extend,  to some extent, the framework of validity of the results in \cite{FiPu06}, upon which  part of our results are based. Specifically, this enables us to consider potentials that are not necessarily larger than a positive constant in a given region, but are instead  greater  than a weight $\omega$ in the same region.}

\subsection{Main results}  For a function {$v$  with compact essential support\footnote{ $ \operatorname {ess\,supp} (v):={\mathbb R}^2\setminus \bigcup \left\{{\mathcal O} \subseteq {\mathbb R}^2:{\mathcal O} {\text{ is open and }}v=0,\, {\text{almost everywhere in }}{\mathcal O}  \right\}$.\Bk}, denoted by $ {\rm ess\,supp} \,  v$, 
define
\bel{ms50+}
\gC_+(v) : = 1+  \ln{\left(\frac{b}2\,{\rm Cap}({\rm ess\,supp} \, v)^2\right)}.
\ee
}
For $v$  non--negative, set
$
K_-(v):=\{ z\in\C: \limsup_{r\downarrow 0} \frac{\log \int_{|z-\zeta|\leq r}v(\zeta)d m(\zeta)}{\log r}<\infty\}
$ where $m(\zeta)$ denotes the Lebesgue measure on $\R^2$, and 
define 
\bel{ms50-}
\gC_-(v) : = 1+  \ln{\left(\frac{b}2\,{\rm Cap}(K_-(v))^2\right)}.%{\rm ess\,supp}
\ee
Obviously since $K_-(v) \subset {\rm ess\,supp} \, v$, then $\gC_-(v)\leq \gC_+(v)$. Notice that the equality holds { for example,} when $v$ satisfies $C \chi_K \geq v \geq c \chi_K$ for some constants $C\geq c >0$ and a compact set $K \subset \C$ with non-empty interior.

{First, we show that when  $V_1>0$ on a domain $\Omega$  \Bk and its support contains that of $W$ and $V_{2,-}:= \max (0, -V_2)$, the distribution of the eigenvalues near each Landau-Dirac level is mainly governed by $V_1$ (in particular, the side where the eigenvalues accumulate is determined only by the sign of $V_1$).}

\begin{theorem}\label{thm1} 
 Let ${\cV} \in L^{\infty}(\R^2,\R^2 )$  be a {function given by \eqref{defV}  essentially supported in $\cK$, a compact set with Lipschitz boundary.} 
Suppose that there exist $\Omega$ an open bounded set in $\R^2$, $w$ a weight { in} $\Omega$ {and constants $C_2, C_3\geq 0$ such that:
  \bel{V1>}
 V_1 \geq \chi_\Omega w ; \quad  V_2 \geq - C_2 \chi_\Omega w ; \quad | W | \leq C_3 \chi_\Omega w .
  \ee}
% \bel{V1>}
% V \geq \chi_\Omega w \; \begin{pmatrix}
%	1&0\\
%	0&-C_2 \end{pmatrix},
% \ee}
 %with { $C_2 \geq 0$}.
% \begin{itemize}
% \item $\supp V_1\subset \overline \Omega$
% \item $V_1 \geq w$ on $\Omega$
% \item $\delta|V_2|\leq  w$  on $\Omega$ for some $\delta>0$
% \end{itemize}
 Then, for any  $q \in \Z$, the eigenvalue counting function satisfies
\begin{align}\label{eqthm1}\begin{split}
  &\frac{|\ln{\lambda}|}{\ln_2(\lambda)^2}\big(\gC_-(\chi_\Omega w)+o(1)\big)\\	
\leq&\,{\mathcal N}_+^q(\lambda)-\frac{|\ln{\lambda}|}{\ln_2(\lambda)} - \frac{|\ln{\lambda}|\ln_3(\lambda)}{\ln_2(\lambda)^2} \qquad  \lambda \searrow 0,\\
\leq&
  \frac{|\ln{\lambda}|}{\ln_2(\lambda)^2}\big(\gC_+(\chi_{\cK})+o(1)\big)
\end{split}
 \end{align}
where 
%$\Phi_1(\lambda;C) : = \frac{|\ln{\lambda}|}{\ln_2(\lambda)} \left(1 + \frac{\ln_3(\lambda)}{\ln_2(\lambda)} + \frac{C}{\ln_2(\lambda)}\right)$, with
     $
     \ln_2(\lambda) : = \ln{|\ln{\lambda}|}$,  $\ln_3(\lambda) : = \ln{\ln_2(\lambda)}
    $.
    %,  and ${\rm ess\,supp} \, V$ is the union of the essential supports of $V_1$, $V_2$ { and $W$}.

On the other side 
\bel{eqthm1_b}
{\mathcal N}_-^q(\lambda)=O(1), \qquad  \lambda \searrow 0.
\ee
When $q=0$ the hypothesis on $V_2$ is useless and {in the upper bound, $\cK$ can be replaced by $\cK_1$, a compact set with Lipschitz boundary which contains ${\rm ess\,supp} \, V_1 \cup {\rm ess\,supp} \, W$.}

If (\ref{V1>}) is satisfied by $-{\cV}$, then (\ref{eqthm1}) is  satisfied by  
${\mathcal N}_-^q(\lambda)$ and (\ref{eqthm1_b}) by ${\mathcal N}_+^q(\lambda)$.
\end{theorem}

{
\begin{remark}\label{rmqV1}
Observe that the predominance of $V_1$ in the previous result contradicts   the intuitive expectation derived from  analyzing the symbol of the Dirac operator $D_{\cV}$. Indeed, the eigenvalues of $\sigma \cdot (\xi -A) + m \sigma_3 +{\cV}$, the matrix symbol of $D_{\cV}$, are given by
$$ \rho_\pm(x,\xi):= \frac{V_1 + V_2}2 \pm \sqrt{ | \xi - A+W|^2 + \left(\frac{V_1 - V_2}2 + m\right)^2}.$$
This formula suggests that the roles of $V_1$ and $V_2$ are { similar.} Unlike a semi-classical context of perturbative magnetic field (e.g. as in \cite{BrRo99}), Theorem \ref{thm1} shows that the presence of a constant magnetic field significantly modifies the way a perturbation influences the spectral behavior. A mathematical explanation of the origin of this phenomenon comes from Lemma \ref{lem1} (see Remark \ref{rmqV1+}).

\end{remark}
}

\begin{remark} It is worth  noting that for Lorentz potentials such as 
$$
 {\cV}_M := \begin{pmatrix}
	M \chi_\Omega &0\\
	0&-M  \chi_\Omega \end{pmatrix}, \quad M>0,$$
 {with $\Omega$ a bounded domain with Lipschitz boundary,} this result gives 
 the following asymptotic expansion 	as $ \lambda \searrow 0$
 \bel{conj}{\mathcal N}_+^q(\lambda) = \frac{|\ln{\lambda}|}{\ln_2(\lambda)} + \frac{|\ln{\lambda}|\ln_3(\lambda)}{\ln_2(\lambda)^2}  +   \frac{|\ln{\lambda}|}{\ln_2(\lambda)^2}\big(\gC(\Omega)+o(1)\big), \qquad {\mathcal N}_-^q(\lambda)=O(1),\ee
with $\gC(\Omega)  = \gC_-(M \chi_\Omega )  = \gC_+(M \chi_\Omega )  = 1+  \ln{\left(\frac{b}2\,{\rm Cap}(\Omega)^2\right)}$.

Having in mind that the ``infinite mass realization" of the Dirac operator in $\R^2 \setminus \overline{\Omega}$, $D^{MIT}_{\overline{\Omega}^c}$, is the limit (in the resolvent sense) of $D_0 + {\cV}_M$ as $M \nearrow + \infty$ (see for instance \cite{BCLS} in the absence of a magnetic field), the previous result suggests the following conjecture concerning the distribution of the eigenvalues of $D^{MIT}_{\overline{\Omega}^c}$ near each Landau-Dirac level $\mu_q$, $q \in \Z$.

\noindent
{\bf Conjecture. }  {\em The Dirac operator in $\R^2 \setminus \overline{\Omega}$ with \emph{Infinite Mass boundary condition} 
%$D^{MIT}_{\overline{\Omega}^c}$, 
has an infinite  (resp. finite)  number of discrete eigenvalues above (resp. below) each Landau-Dirac level $\mu_q$ with the distribution given by \eqref{conj}.} { Or more precisely, %(see \cite{FiPu06}), 
$\left\{\lambda_{k,q}^+(\Omega)\right\}_{k \in \Z_+}$, the non-increasing sequence of eigenvalues above $\mu_q$  satisfies the asymptotic behavior:
\[	\lim_{k \to \infty} \left(k! (\lambda_{k,q}^+(\Omega) - \mu_q) \right)^{1/k} = \frac{b\, \capa(\Omega)^2}{2}.\]}
\end{remark}

%\vspace{0.3 cm}

For our second result both $V_1$ and $V_2$ could have negative contributions provided that the support of the positive contribution ``encircle" the support of the negative one. {The contribution of $W$ will be still weaker}. More specifically: 
\begin{defi}\label{def}
We say that a compact set  $K$ is \emph{encircled} by  an open set $\Omega$ if  there  exists a Jordan curve $\Gamma\subset\Omega $ such that $K$ is contained in the interior part of $\Gamma$.
\end{defi}
For a general class of open sets which encircle compacts set we refer to Proposition \ref{leborder} of the Appendix.

\begin{theorem}\label{theo1_b}
 Let ${\cV} \in L^{\infty}(\R^2)$  be a {function given by \eqref{defV}  essentially supported in $\cK$, a compact set with Lipschitz boundary.}
 Suppose there exist { $K \subset \cK \subset  \R^2$} a compact set, $\Omega_1, \Omega_2$ open bounded subsets of $\R^2\setminus K$, $w$ a continuous and positive weight for $\Omega_1$  and for $ \Omega:=\Omega_1 \cup \Omega_2$ such that:
 \bel{V=}
 V_1 \geq \chi_{\Omega_1} w - C \chi_{K}  ; \quad V_2 \geq \chi_{\Omega_2} w - C \chi_{K}  ; \quad | W | \leq C_3 \chi_{\Omega_1} w  + C \chi_{K},
 \ee 
for some constants $C, C_3 \geq 0$ and  $K$ is \emph{encircled} by $\Omega$.

  Then % { for $q \neq 0$,}
the asymptotics \eqref{eqthm1} and  \eqref{eqthm1_b} hold. {When $q=0$ the hypothesis on $V_2$ is still useless and $V_2$ does not appear in the upper bound.}

If (\ref{V=}) is satisfied by $-{\cV}$, then the above estimates are satisfied by interchanging ${\mathcal N}_+^q(\lambda)$ and ${\mathcal N}_-^q(\lambda)$.
%The same results are true if we interchange $V_1$ with $V_2$.
 \end{theorem}
 
 Combining both above theorems, it is easy to see that Theorem \ref{theo1_b} still hold true if in \eqref{V=}, the assumption on $V_2$ is replaced by:
\bel{V=>}
 V_2 \geq \chi_{\Omega_2} w - C_2 \chi_{\Omega_1} w - C \chi_{K}
\ee
 for some constants $C_2 , C \geq 0$.
 
% \begin{corollary}\label{corV}
% { Theorem \ref{theo1_b} still hold true if in \eqref{V=}, the assumption on $V_2$ is replaced by:
%\bel{V=>}
% V_2 \geq \chi_{\Omega_2} w - C_2 \chi_{\Omega_1} w - C \chi_{K}
%\ee
% for some constants $C_2 , C \geq 0$. }
%The asymptotics \eqref{eqthm1} and  \eqref{eqthm1_b} still hold true if the diagonal potential ${\cV}$ satisfies:
% \bel{V=>}
% V \geq \begin{pmatrix}
%	\chi_{\Omega_1} w&0\\
%	0&	 \qquad (\chi_{\Omega_2} - C_2 \chi_{\Omega_1})w \end{pmatrix} -C 	\chi_{K}I_2
% \ee 
% with { $C, C_2 \geq 0$} and $\Omega$, $K$, $w$ as in Theorem \ref{theo1_b}.
%

%The same results are true if we interchange $V_1$ with $V_2$.
% \end{corollary}

{The above results show a hierarchy between the { components} %coefficients 
of ${\cV}$. If the supports of {$V_1$,$V_2$ and $W$ are  the same}, then the localization and the distribution of the eigenvalues are essentially given by the sign of $V_1$. In this case the distribution of the eigenvalues near each Landau-Dirac level is similar. Theorem \ref{theo1_b} shows that $V_2$ can play a predominant role  when, for instance, $\Omega_1 = \emptyset$ (except near $\mu_0$). In this case $D_{\cV}$ may have a finite number of eigenvalues near $\mu_0$ (e.g. for $V_1=0$) and an infinite number near each $\mu_q$, $q \in \Z^*$ {(the opposite property will be obtained in Corollary \ref{prop_inf}).} 

In the following section we %show that, on the contrary it is possible to have a finite number of eigenvalues near $\mu_1$ and an infinite number near $\mu_0$. For smooth potentials we 
will identify an effective operator allowing to explicitly analyze the competition between $V_1$, $V_2$ and $W$ {when these functions are sufficiently smooth} (in the spirit of \cite{RoTa08} for the Schr\"odinger and Pauli operators).

\subsection{ Effective operator for smooth perturbations }
%\subsection{Infiniteness and finiteness of the eigenvalues }
% In all previous results, the distribution of the eigenvalues is identical near each Landau-Dirac level $\mu_q$, $q \in \Z^*$. %However, we can 
% { Now, let us} construct diagonal potentials for which there is an accumulation of eigenvalues above  $\mu_0$ while there is a free interval of eigenvalues above $\mu_1$.  Even more, there exists potentials that create an infinite number of discrete eigenvalues  near $\mu_0$ but a finite number near $\mu_1$.
% 

In this section we assume that $V_1$, $V_2$ and $W$ are smooth compactly supported functions. Our main goal is to prove that the distribution of the eigenvalues of $D_{\cV}$ near $\mu_q$, $q \in \Z$, is governed by effective Toeplitz operator $p_0 v_q({\cV}) p_0$ with $v_q({\cV})$ { being} the smooth function given by the following linear combinations of $V_1$, $V_2$, $\widetilde{b} = \partial_{x_1} \widetilde{A}_2 -  \partial_{x_2} \widetilde{A}_1$ and their derivatives until the order $2 |q|$ or $2 |q|-2$:

\bel{defvq}
 {v}_{q} ({\cV}):=\begin{cases} 
\overline{t}_q {\rm L}^{(0)}_{|q|}\left(-\frac{\Delta}{2b}\right) V_{1}  + 
    t_q {\rm L}^{(0)}_{|q|-1}\left(-\frac{\Delta}{2b}\right) V_2 + \frac1{\mu_q}  {\rm L}^{(1)}_{{|q|-1}} \left(-\frac{\Delta}{2b}\right) \widetilde{b}, &q <0\\
    V_1, &q =0\\
t_q {\rm L}^{(0)}_{{q}} 
    \left(-\frac{\Delta}{2b}\right)
     V_{1}  
    + 
   \overline{t}_q {\rm L}^{(0)}_{{q}-1}\left(-\frac{\Delta}{2b}\right) V_2  + \frac1{\mu_q}  {\rm L}^{(1)}_{{q-1}} \left(-\frac{\Delta}{2b}\right) \widetilde{b},  & q >0 \\\end{cases}
\ee

 where $t_{q} := \frac1{2} \Big( 1 +  \frac{m}{|\mu_q|}\Big)$, $\overline{t}_q= 1 - t_q$, 
    $$
      {\rm L}^{(m)}_q(t) : = \sum_{j=0}^q \binom{q+m}{q-j} \frac{(-t)^j}{j!}, \quad t \in \re, \quad q \in \Z_+,  \quad m=0,1
      $$
     are the Laguerre polynomials and $\Delta:= \partial_{x_1}^2 + \partial_{x_2}^2$ denotes the Laplacian. For a more precise statement of the result we refer to Proposition \ref{vqeff} in 
   Section \ref{sinfty}. Let us give some consequences about infiniteness$/$finiteness of the eigenvalues near some Dirac-Landau levels.  The proofs appear  in Section \ref{sinfty}. In the following results, we denote by 
   $\cC_0^{n}$, $n\in \Z_+$, the space of compactly supported functions with  $n$ continuous derivatives on $\R^2$.
	
         \begin{corollary}\label{cor26}
  	Fix $q \in \Z$.  
	Assume that ${\cV}$ defined by \eqref{defV} is such that $V_1 \in \cC_0^{2 |q|}$, $V_2 \in \cC_0^{2 |q|-2}$, $W \in \cC_0^{2 |q|-1}$ and 
	$|V_1| \leq v_1$, $ |V_2| \leq v_2$, {$ |W| \leq w,$}
	for some  non-negative smooth compactly supported functions {$v_1, w \in \cC_0^{2 |q|}$ and $v_2 \in \cC_0^{2 |q|-2}$}.   
	%Under the assumptions of Proposition \ref{vqeff}, 
	
	If $v_q (\cV) $ defined by \eqref{defvq} satisfies  $v_q (\cV) \neq 0$, then there exists  an infinite number of discrete eigenvalues that accumulate at $\mu_q$. 
   \end{corollary}

       }
       
           \begin{corollary}  \label{prop_inf}
We consider diagonal potentials ${\cV}$ (i.e. $W= 0$ in \eqref{defV}).  
 For any  given  $0\leq V_1 \in \cC_0^{2}(\R^2)$   there exists $V_2 \in \cC_0^{0}(\R^2)$ such that for any $\delta >0$ sufficiently small,
 $D_0+\cV$ has an infinite number of discrete eigenvalues in $(\mu_0, \mu_0+\delta)$  and no eigenvalues in {$(\mu_1, \mu_1+\delta)\cup (\mu_{-1}, \mu_{-1}+\delta) .$} 
      \end{corollary} 
      
   \begin{remark}
  { Under more technical assumptions we can also expect to have potentials for which these different behavior appear near other Landau-Dirac levels.} 
   \end{remark}
}
 
\section{Proof of the main results}\label{s3}

 First, for $T = T^*$ a  self-adjoint compact operator in a Hilbert space and $s>0$, set
 \bel{defn+}
 n_\pm(s;T) = {\rm Tr}\,\one_{(s,\infty)}(\pm T).
 \ee
 That is, $n_+(s,T)$ (resp., $n_-(s,T)$) is the number of the eigenvalues of $T$ counted with the multiplicities, greater than $s>0$ (resp., less than $-s < 0$).

Next, let us recall some  results about the operator $D_0$. The Foldy-Wouthuysen transformation $U_{FW}$ defined by
\begin{equation}\label{ufw}
 U_{FW} = \frac1{\sqrt{2}} \sqrt{I + m | D_{0} |^{-1}} + \sigma_3 \text{ sign}(D_0-m  \sigma_3) \frac1{\sqrt{2}} \sqrt{I - m | D_{0} |^{-1}},
 \end{equation}
 gives us 
$$D_0=
U_{FW}^* \; \left( \begin{array}{cc}  \sqrt{a^*a + m^2  }  &0 \\
0  & - \sqrt{aa^* + m^2 }
\end{array} \right) \; U_{FW}.$$
Notice that  $a^*a = H_L -b $ and  $aa^*= a^*a + 2b = H_L + b $.
%$$U_{FW}:= \frac1{\sqrt{2}} \Big(   \Big) $$
In particular, for any $n \in \Z_+$, the following eigenspaces coincide with $ (a^*)^n$ Ker $a$ (see for instance \cite[Section 9]{BrPuRa04}):
\bel{eq1}
\text{Ker} (H_L-\Lambda_n)=\text{Ker} (a^*a - 2bn) = \text{Ker} (aa^* - 2b(n+1))= (a^*)^n \text{Ker}( a).
\ee
Let  $p_n$ be  the orthogonal projection onto   $\text{Ker} (H_L-\Lambda_n)$. Then 
, 
%= {\text Ker} (a^*a - 2bq) = {\text Ker} (aa^* - 2b(q+1)) = $ (a^*)^q$ Ker $a$, 
%the projectors ${\mathcal P}_{q}$ are given by
\bel{defPq}
 {\mathcal P}_{q} =\begin{cases}  U_{FW}^* \; \left( \begin{array}{cc} p_q   &0 \\
0  & 0
\end{array} \right) \; U_{FW} & q \in \{0,1,2,...\}\\[1.5em]

 U_{FW}^* \; \left( \begin{array}{cc} 0   &0 \\
0  & p_{|q|-1}
\end{array} \right) \; U_{FW},&q \in \{-1,-2,...\}.
\end{cases}
\ee

\subsection{Reduction to Toeplitz operators}

In this section, for $q \in \Z$ fixed, the analysis of the counting functions $\mathcal N_\pm^q$  (see definition \eqref{defNpm}) is reduced to the spectral study of the Toeplitz operators $\mathcal P_{q}{\cV} \mathcal P_{q}$. Then we connect these operators to Toeplitz operators $p_{|q|} v p_{|q|}$ for some operator $v$ depending on ${\cV}$, $a$ and $a^*$. 

The next result is a consequence  of general consideration that we prove in the Appendix.  The main novelty is that they are   valid   for  not necessarily lower semi-bounded operators {and adapted to matrix operators.}
\begin{proposition}\label{Diag}
	For a given  $\epsilon >0$ define the potentials  
	$$
	{\cV}_\epsilon^\pm :=\begin{pmatrix}
	V_1\pm\epsilon(|V_1|+|W|)&W^*\\
	W&V_2\pm\epsilon(|V_2|+|W|)
\end{pmatrix}  .
	$$  Then, as $\lambda \searrow 0$, we have:
	\begin{align*}
	n_+(\lambda,\mathcal P_q{\cV}_\epsilon^-\mathcal P_q)+O(1)&\leq \mathcal N_+^q(\lambda)\leq  n_+(\lambda,\mathcal P_q{\cV}_\epsilon^+\mathcal P_q)+O(1)
	\\
	n_-(\lambda,\mathcal P_q{\cV}_\epsilon^+\mathcal P_q)+O(1)&\leq \mathcal N_-^q(\lambda)\leq  n_-(\lambda,\mathcal P_q{\cV}_\epsilon^-\mathcal P_q)+O(1).
	\end{align*}
\end{proposition}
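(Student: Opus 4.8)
The plan is to combine a Birman--Schwinger-type reduction for non-semibounded operators (established in the Appendix) with a standard ``sandwiching'' of the perturbation by $V_\epsilon^\pm$. First I would express the counting functions $\mathcal N_\pm^q(\lambda)$ through an index of a pair of spectral projections, using the generalized Birman--Schwinger principle valid for operators that are not lower semibounded (as announced in the introduction and proved in the Appendix). Concretely, for $\lambda$ in the resolvent gap near $\mu_q$, the number of eigenvalues of $D_0+V$ in $(\mu_q+\lambda,\alpha)$ is governed by the spectral flow of the family $t \mapsto D_0 + tV$ across the level $\mu_q+\lambda$, which in turn is computed from the eigenvalue counting of a compact operator built from $|V|^{1/2}(D_0-z)^{-1}|V|^{1/2}$-type expressions. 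Because $V$ has no definite sign, one must use $|V|^{1/2}$ together with the matrix sign of $V$; this is exactly the apparatus the authors say they are importing from \cite{Pu09} and extending.

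The second step is the localization of the Birman--Schwinger operator near the Landau--Dirac level $\mu_q$. Writing $(D_0-z)^{-1} = (\mu_q-z)^{-1}\mathcal P_q + \sum_{j\neq q}(\mu_j-z)^{-1}\mathcal P_j$ from \eqref{R0z}, the second sum is bounded uniformly for $z$ near $\mu_q$, so it contributes only a bounded operator and hence only an $O(1)$ error to all the counting functions (via the Weyl inequalities $n_\pm(s_1+s_2; A+B)\le n_\pm(s_1;A)+n_\pm(s_2;B)$). This reduces the problem, up to $O(1)$, to counting eigenvalues of a Toeplitz-type operator of the form $\mathcal P_q |V|^{1/2}(\cdots)|V|^{1/2}\mathcal P_q$, which after the standard manipulation is comparable to $n_\pm$ of $\mathcal P_q V \mathcal P_q$ shifted by $\lambda$; here the compactness of $\mathcal P_q V \mathcal P_q$ (since $V$ is compactly supported and bounded, and $\mathcal P_q$ has a smooth integral kernel) is what makes $n_\pm(\lambda;\cdot)$ finite and the asymptotics meaningful.

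The third step is the sign-indefinite sandwich. Since $\pm|V| \ge \pm (\pm V) $ is false in general, but $V \le V + \epsilon|V| = V_\epsilon^+$ and $V \ge V - \epsilon|V| = V_\epsilon^-$ hold as hermitian-matrix inequalities for every $\epsilon>0$, monotonicity of $n_\pm(\lambda;\mathcal P_q \cdot \mathcal P_q)$ under the order of hermitian operators gives
\begin{equation*}
n_\pm(\lambda;\mathcal P_q V_\epsilon^- \mathcal P_q) \le n_\pm(\lambda;\mathcal P_q V \mathcal P_q) \le n_\pm(\lambda;\mathcal P_q V_\epsilon^+ \mathcal P_q).
\end{equation*}
The role of the parameter $\epsilon$ is to absorb, in the Birman--Schwinger reduction, the genuinely off-diagonal and lower-order terms coming from the resolvent expansion and from the difference between the spectral-flow count at level $\mu_q+\lambda$ and the naive Toeplitz count at threshold $\lambda$: for any fixed $\epsilon>0$ these discrepancies are dominated, for $\lambda$ small enough, by the gap created between $V$ and $V^\pm_\epsilon$. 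I would make this precise by choosing the cutoff between ``near $\mu_q$'' and ``far from $\mu_q$'' at a $\lambda$-independent distance, bounding the far part by $O(1)$, and bounding the correction to the near part by $n_\pm\big((1-\epsilon')\lambda; \mathcal P_q V\mathcal P_q\big)$-type quantities that get reabsorbed into $n_\pm(\lambda;\mathcal P_q V_\epsilon^\pm\mathcal P_q)+O(1)$ using the continuity of the eventual three-term asymptotics in the threshold.

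The main obstacle I anticipate is the first step: setting up the Birman--Schwinger principle correctly for the non-semibounded Dirac operator, where one cannot simply use the variational (min-max) characterization of eigenvalues below the essential spectrum. One must work with the index of the pair $\big(E_{D_0}(\mu_q+\lambda,\infty), E_{D_0+V}(\mu_q+\lambda,\infty)\big)$ and its identification with a spectral flow, and control that this index equals the eigenvalue count in the gap — this is precisely the material the authors defer to the Appendix, so in the body of the proof I would cite it and concentrate on the (comparatively routine) resolvent localization and the $V^\pm_\epsilon$ sandwich described above.
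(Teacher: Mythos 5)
Your proposal diverges from the paper's route in a way that matters, and two of its key steps don't go through as stated.

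The paper does not pass through a Birman--Schwinger operator $|V|^{1/2}(D_0-z)^{-1}|V|^{1/2}$ at all. It works directly with the index $\Xi(\lambda;B,A)=\operatorname{Index}\bigl(E_A(-\infty,\lambda),E_B(-\infty,\lambda)\bigr)$, expressing $\mathcal N_+^q(\lambda)=\Xi(\mu_q+\lambda;D_0+V,D_0)-\Xi(\alpha;D_0+V,D_0)$ via Lemma~\ref{Le42}, and then applying the monotonicity of $\Xi$ in the perturbation (Lemma~\ref{Le44}). The crucial ingredient that you miss entirely is the operator inequality, taken from \cite[Lemma~4.2]{PuRo11},
\[
D_0+V\ \le\ D_0+\mathcal P_q V_\epsilon^+\mathcal P_q+\mathcal P_q^\perp\bigl(V+\epsilon^{-1}|V|\bigr)\mathcal P_q^\perp ,
\]
which is a Cauchy--Schwarz decoupling of the cross-terms $\mathcal P_q V\mathcal P_q^\perp+\mathcal P_q^\perp V\mathcal P_q$. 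This is the \emph{only} reason the parameter $\epsilon$ and the operator $|V|$ appear: the $\epsilon\,\mathcal P_q|V|\mathcal P_q$ term is the price paid on the $\mathcal P_q$ block for dominating the cross-terms, while $\epsilon^{-1}\mathcal P_q^\perp|V|\mathcal P_q^\perp$ is the (harmless, because $O(1)$ in eigenvalue count near $\mu_q$) price on the complementary block. Your description of $\epsilon$ as something that ``absorbs off-diagonal and lower-order terms from the resolvent expansion'' is vague precisely because you have not identified this inequality; without it there is no mechanism for the absorption.

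Your Step~3 is also wrong as stated. The chain $n_\pm(\lambda;\mathcal P_q V_\epsilon^-\mathcal P_q)\le n_\pm(\lambda;\mathcal P_q V\mathcal P_q)\le n_\pm(\lambda;\mathcal P_q V_\epsilon^+\mathcal P_q)$ is valid only for $n_+$: since $A\le B$ implies $n_-(s;A)\ge n_-(s;B)$, the $n_-$ inequalities come out reversed. More fundamentally, sandwiching $\mathcal P_q V\mathcal P_q$ between $\mathcal P_q V_\epsilon^\pm\mathcal P_q$ does not by itself connect anything to $\mathcal N_\pm^q(\lambda)$, which are counting functions of the full operator $D_0+V$, not of the compression $\mathcal P_q V\mathcal P_q$; the whole content of the proposition is the reduction from the former to the latter, and that reduction is exactly what the block inequality above accomplishes. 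Finally, invoking ``the continuity of the eventual three-term asymptotics in the threshold'' to close a gap here is circular, since Proposition~\ref{Diag} is a prerequisite for proving those asymptotics. To repair the argument, drop the Birman--Schwinger detour, establish the block operator inequality, apply Lemma~\ref{Le44} and Lemma~\ref{Le42}, and observe that the contribution of $\mathcal P_q^\perp(D_0+V+\epsilon^{-1}|V|)\mathcal P_q^\perp$ to the count in $(\mu_q+\lambda,\alpha)$ is $O(1)$ because each $\mathcal P_p(V+\epsilon^{-1}|V|)\mathcal P_p$, $p\ne q$, is compact.
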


The proof appears   in the Appendix, section \ref{ss42}.

\begin{lemma}\label{lem1}
 Let ${\cV}$  be defined by \eqref{defV} with  $V_1, V_2 \in L^{\infty}(\R^2,\R )$ and $W \in L^{\infty}(\R^2,\C )$. Then we have
\begin{align*}
{\mathcal P}_{q} {\cV} {\mathcal P}_{q} =& U_{FW}^* \begin{pmatrix}  T_{q} ({\cV})  & 0  \\
0 & 0
\end{pmatrix}  U_{FW},\quad \text{for}\,\, q\geq 0; \\[0.5em]
{\mathcal P}_{q} {\cV} {\mathcal P}_{q} = &U_{FW}^* \begin{pmatrix}  0  & 0  \\
0 &  T_{q} ({\cV})
\end{pmatrix}  U_{FW},\quad \text{for}\,\, q< 0,
\end{align*}
where 
\begin{align*}% \nonumber
T_{0} ({\cV})  := &  { p_0 V_1 p_0}; \\
T_{q} ({\cV})  := & { t_{q}} { p_q V_1 p_q} + \frac{1- t_q}{2bq} {  p_q a^* V_2 a p_q } + \frac{1}{2 \mu_q}  {  p_q (a^* W +  W^* a) p_q },\quad \text{for}\,\, q> 0; \\
T_{q} ({\cV})   := & { t_{q}} {  p_{|q|-1} V_2 p_{|q|-1} } + \frac{1- t_q}{2b|q|} {  p_{|q|-1} aV_1 a^*  p_{|q|-1} } + \frac{1}{2 \mu_q}   {  p_{|q|-1} ( a W^* + W a^*)  p_{|q|-1} } ,\quad \text{for}\,\, q< 0,
\end{align*}
with 
$t_{q} := \frac1{2} \Big( 1 +  \frac{m}{|\mu_q|}\Big)$.
\end{lemma}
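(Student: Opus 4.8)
\textbf{Plan for the proof of Lemma \ref{lem1}.}

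The strategy is to compute $\mathcal P_q V \mathcal P_q$ by conjugating with the Foldy--Wouthuysen transformation and using the explicit description \eqref{defPq} of the projections $\mathcal P_q$. Consider first $q>0$. From \eqref{defPq} we have $\mathcal P_q = U_{FW}^* \, \mathrm{diag}(p_q, 0)\, U_{FW}$, so writing $\widetilde V := U_{FW} V U_{FW}^*$ and using $U_{FW} U_{FW}^* = I$ gives $\mathcal P_q V \mathcal P_q = U_{FW}^* \,\mathrm{diag}(p_q,0)\,\widetilde V\, \mathrm{diag}(p_q,0)\, U_{FW}$, and the $(1,1)$-block that survives is $p_q \widetilde V_{11} p_q$ where $\widetilde V_{11}$ is the $(1,1)$-entry of $\widetilde V$. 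So the whole computation reduces to identifying $\widetilde V_{11}$ modulo operators that are killed on both sides by $p_q$. The point is that $\widetilde V_{11}$ is a scalar (not $2\times 2$) operator, and we must massage it into the stated combination of Toeplitz-type expressions. The $q<0$ case is symmetric, using the lower block $\mathrm{diag}(0,p_{|q|-1})$ and the lower-right entry of $\widetilde V$; the $q=0$ case is easiest since $p_0 a^*a p_0 = 0$, i.e. $a p_0 = 0$, so only the $V_1$-term survives.

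The key computation is to expand $U_{FW} V U_{FW}^*$ using the explicit formula \eqref{ufw}: $U_{FW} = \frac1{\sqrt2}\sqrt{I+m|D_0|^{-1}} + \sigma_3\,\mathrm{sign}(D_0 - m\sigma_3)\frac1{\sqrt2}\sqrt{I-m|D_0|^{-1}}$. I would first record that on the range of $\mathcal P_q$ (equivalently after sandwiching by $p_q$ in the appropriate block) the operators $|D_0|^{-1}$ and $\mathrm{sign}(D_0 - m\sigma_3)$ act as explicit scalars/matrices determined by $\mu_q$: on $\mathrm{Ran}\,p_q$ in the top slot one has $a^*a = 2bq$, hence $|D_0|$ acts as $|\mu_q| = \sqrt{2bq+m^2}$, so $\frac12(1+m|D_0|^{-1})$ becomes $t_q$ and $\frac12(1-m|D_0|^{-1})$ becomes $1-t_q$. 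The cross term $\sigma_3\,\mathrm{sign}(D_0-m\sigma_3)$ is the operator that, heuristically, intertwines the top and bottom slots via $a$ and $a^*$ and contributes the factors $\frac1{2\mu_q}$ together with $a^*(\cdot)a$, etc. Concretely: write $V = \mathrm{diag}(V_1,V_2) + \begin{pmatrix}0 & W^*\\ W & 0\end{pmatrix}$ and push each piece through the four cross terms in $U_{FW} V U_{FW}^*$. The diagonal part $\mathrm{diag}(V_1,V_2)$ produces, in the $(1,1)$-block, a term proportional to $V_1$ (from the $\sqrt{I+m|D_0|^{-1}}$ self-pairing, giving the $t_q$ coefficient after restriction) and a term of the form (scalar)$\times a^* V_2 a$ (from pairing the off-diagonal $\sigma_3$-part with itself, using that $\sigma_3\,\mathrm{sign}(D_0-m\sigma_3)$ maps the top space to the bottom space intertwining $p_q$ with $p_{q-1}$ via $a$). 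Restricting to $\mathrm{Ran}\,p_q$ fixes the scalar as $\frac{1-t_q}{2bq}$ after normalising $a^*a = 2bq$. The off-diagonal part $\begin{pmatrix}0&W^*\\W&0\end{pmatrix}$ contributes the mixed term $\frac1{2\mu_q} p_q(a^*W + W^*a)p_q$ from the cross products of the two summands of $U_{FW}$.

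The cleanest way to organise all of this, and the route I would actually take to avoid sign/ordering errors, is supersymmetry: recall from the excerpt that $D_0 = U_{FW}^* \,\mathrm{diag}(\sqrt{a^*a+m^2},-\sqrt{aa^*+m^2})\,U_{FW}$ and that the eigenspaces satisfy \eqref{eq1}, in particular $a^*: \mathrm{Ran}\,p_{q-1}\to\mathrm{Ran}\,p_q$ is (up to the normalising constant $\sqrt{2bq}$) unitary with inverse $\tfrac{1}{2bq}a$. Then $\mathrm{Ker}(D_0 - \mu_q)$ for $q>0$ is spanned by vectors of the form $U_{FW}^*(f,0)^T$ with $f\in\mathrm{Ran}\,p_q$, and one computes $\langle U_{FW}^*(f,0)^T, V\, U_{FW}^*(g,0)^T\rangle = \langle (f,0)^T, U_{FW} V U_{FW}^*(g,0)^T\rangle$; expanding $U_{FW}^*(g,0)^T$ via \eqref{ufw} gives a combination of a ``particle'' component in $\mathrm{Ran}\,p_q$ in the top slot with weight $\sqrt{t_q}$ and a ``hole'' component in the bottom slot, which after applying $\sigma_3\,\mathrm{sign}(D_0-m\sigma_3)$ lands in $\mathrm{Ran}\,p_{q-1}$ with weight $\sqrt{1-t_q}$; then $\langle\cdot,V\cdot\rangle$ splits exactly into the three advertised pieces. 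I expect the main obstacle to be bookkeeping: correctly tracking how $\mathrm{sign}(D_0-m\sigma_3)$ acts between the top ($p_q$) and bottom ($p_{q-1}$) blocks — in particular getting the intertwining $a, a^*$ on the correct side and the scalar prefactors $t_q$, $\tfrac{1-t_q}{2bq}$, $\tfrac1{2\mu_q}$ exactly right, including the case analysis $q>0$ vs. $q<0$ where the roles of $p_{|q|}$, $p_{|q|-1}$ and of $a$, $a^*$ swap. Everything else is a direct substitution once the action of $U_{FW}$ on the Landau-Dirac eigenspaces is pinned down.
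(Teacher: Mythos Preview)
Your proposal is correct and follows essentially the same approach as the paper: both compute $p_n\Pi_\pm\, U_{FW} V U_{FW}^*\, p_n\Pi_\pm$ by expanding \eqref{ufw}, using that $|D_0|$ and $\sigma_3\,\mathrm{sign}(D_0-m\sigma_3)=\begin{pmatrix}0&a^*\\-a&0\end{pmatrix}$ commute with $p_n\Pi_\pm$ and act as explicit scalars/intertwiners there, and then treating the diagonal and anti-diagonal parts of $V$ separately. The paper organises the bookkeeping by first writing $U_{FW}^*p_n\Pi_\pm = \big(\sqrt{t_n^\pm}\,I - \sqrt{(1-t_n^\pm)/\nu_n^\pm}\,\sigma_3(D_0-m\sigma_3)\big)p_n\Pi_\pm$ and then reading off which of the four cross terms survive, which is exactly the computation you outline.
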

{ \begin{remark}
 Note that thanks to the following relations, for any $n\in \Z_+$, the operators $ap_n$, $a^*p_n$, $p_na^*$ and $p_na$  define bounded operators and then for any $q \in \Z$, the above operators $T_{q} ({\cV})$ are defined without any smoothness assumptions on $V_1$, $V_2$, $W$:
 \begin{align*}\| ap_nf \|^2 & = \langle ap_nf , ap_nf \rangle  & = \langle p_nf , a^*ap_nf \rangle & = \langle p_nf , (H_L-b) p_nf \rangle  = 2bn \| p_nf \|^2\\
\| a^*p_nf \|^2  & = \langle a^*p_nf , a^*p_nf \rangle & = \langle p_nf , aa^*p_nf \rangle & = \langle p_nf , (H_L+b) p_nf \rangle  = 2b(n+1) \| p_nf \|^2.
 \end{align*}
   \end{remark}
}
\begin{proof}
Let us introduce the projections 
$$ \Pi_+ := \begin{pmatrix} 1  & 0  \\
0 & 0
\end{pmatrix}, \qquad  \Pi_- := \begin{pmatrix}  0  & 0  \\
0 & 1
\end{pmatrix}, $$
in such a way that $D_0^2 = (H_L-b+m^2) \Pi_+ + (H_L+b+m^2) \Pi_-$ and  
$${\mathcal P}_{q} = \begin{cases}U_{FW}^* p_{q}  \Pi_+  U_{FW}&q\geq 0,\\
U_{FW}^* p_{|q|-1}  \Pi_-  U_{FW}&q<0.
\end{cases} $$

For $n\in\Z_+$ set  $\nu_n^+={2bn}$ and $\nu_n^-={2b(n+1)}$. 
Then,  using that $(H_L \mp b) p_{n} =  \nu_{n}^\pm p_{n} $, we have 
$$| D_{0} | p_{n}  \Pi_\pm = \sqrt{\nu_{n}^\pm+m^2} p_{n}   \Pi_\pm , \qquad | D_{0} - m \sigma_3 | p_{n}   \Pi_\pm = \sqrt{\nu_n^\pm} p_{n}   \Pi_\pm .$$
Consequently, thanks to \eqref{ufw}  we obtain
\begin{align} \nonumber
U_{FW}^* p_{n}   \Pi_\pm = & \Big(  \frac1{\sqrt{2}} \sqrt{I + \frac{m}{ \sqrt{\nu_{n}^\pm+m^2}}} I -
\frac1{\sqrt{2} \sqrt{\nu_{n}^\pm} } \sqrt{I - \frac{m}{ \sqrt{\nu_{n}^\pm+m^2}}} 
\sigma_3(D_0-m  \sigma_3) \Big)  p_{n}   \Pi_\pm\\ \nonumber
= & \Big( \sqrt{t^\pm_{n}} I - \sqrt{\frac{1-t^\pm_{n}}{
{\nu_{n}^\pm}
}}  \sigma_3(D_0-m  \sigma_3) \Big)  p_{n}   \Pi_\pm   ,
 \end{align}
 with
 $t^\pm_{n} := \frac1{2} \Big( 1 +  \frac{m}{\sqrt{\nu_{n}^\pm+m^2}}\Big)$
 and for the dual:
 \begin{align} \nonumber
p_{n}    \Pi_\pm   U_{FW} =  &   p_{n}  \Pi_\pm    \Big( \sqrt{t^\pm_{n}} I + \sqrt{\frac{1-t^\pm_{n}}{
{\nu_{n}^\pm}
}}  \sigma_3(D_0-m  \sigma_3) \Big) .
 \end{align}
Now, let us first consider the case of ${\cV}$ diagonal (i.e. $W =0$).
Using that $ \sigma_3(D_0-m  \sigma_3)  =  \begin{pmatrix}  0  & a^*  \\
- a & 0
\end{pmatrix}
$, we have 
 \begin{align} \nonumber
\left( \begin{array}{cc}  V_1 & 0  \\
0 & V_2
\end{array} \right) \sigma_3(D_0-m  \sigma_3)  & = \begin{pmatrix}  0  & V_1 a^*  \\
- V_2 a & 0
\end{pmatrix}; \\[1em] \nonumber
- \sigma_3(D_0-m  \sigma_3) \left( \begin{array}{cc}  V_1 & 0  \\
0 & V_2
\end{array} \right) \sigma_3(D_0-m  \sigma_3) & = \begin{pmatrix}  a^* V_2 a  &  0  \\
0  & a V_1 a^*
\end{pmatrix} = a^* V_2 a  \Pi_+  +   a V_1 a^* \Pi_- .
 \end{align}
This  implies that $ \Pi_\pm {\cV} \sigma_3(D_0-m  \sigma_3)   \Pi_\pm = 0 = \Pi_\pm  \sigma_3(D_0-m  \sigma_3)  {\cV}  \Pi_\pm $ and then 
$$  p_{n} \Pi_\pm  U_{FW} {\cV} U_{FW}^* p_{n}   \Pi_\pm = p_{n}\Pi_\pm    \Big( t^\pm_{n}  {\cV} +  \frac{1-t^\pm_{n}}{
{\nu_{n}^\pm}
} ( a^* V_2 a  \Pi_+  +   a V_1 a^* \Pi_- )  \Big) p_{n}   \Pi_\pm .$$
Therefore the expressions for $T_{q}({\cV})$ for ${\cV}$ diagonal, follows from the relations 
$  \Pi_+  {\cV} \Pi_+ = V_1 \Pi_+$, $\Pi_-  {\cV}  \Pi_- = V_2 \Pi_- $ by using that
$$
2b |q|  = \begin{cases} \nu_q^+&q\geq 0\\
\nu^-_{|q|-1} &q<0
\end{cases} ,
\qquad 
t_q = \begin{cases} t_q^+&q\geq 0\\
t^-_{|q|-1} &q<0
\end{cases}.
$$
% and that $\Pi_\pm$ are orthogonal projections (which commute with $p_q$). 
For $q=0$, the expression of $T_{0}({\cV})$ comes from the fact that $t_{0} = 1$ and   $ap_0=0$. 

For $W \neq 0$, let us now consider the anti-diagonal case (i.e. $V_1=V_2=0$). We have:
 \begin{align} \nonumber
\left( \begin{array}{cc}  0 & W^*  \\
W & 0
\end{array} \right) \begin{pmatrix}  0  & a^*  \\
- a & 0
\end{pmatrix} &   = \begin{pmatrix}  - W^* a   & 0  \\
0 & W a^*
\end{pmatrix}; \\  \nonumber
\begin{pmatrix}  0  & a^*  \\
- a & 0
\end{pmatrix}  \left( \begin{array}{cc}  0 & W^*  \\
W & 0
\end{array} \right)  &  = \begin{pmatrix}   a^* W   & 0  \\
0 & - aW^*
\end{pmatrix} \\ \nonumber
 \end{align}
 and then 
  \begin{align} \nonumber
 \sigma_3(D_0-m  \sigma_3) \left( \begin{array}{cc}  0 & W^*  \\
W & 0
\end{array} \right)   \sigma_3(D_0-m  \sigma_3) & = \begin{pmatrix} 0 &  a^* W a^*  \\
 a W^* a & 0
\end{pmatrix} .
%= a^* V_2 a  \Pi_+  +   a V_1 a^* \Pi_- .
 \end{align}
It implies that for ${\cV}$ anti-diagonal, $ \Pi_\pm   {\cV}   \Pi_\pm  = 0 = \Pi_\pm   \sigma_3(D_0-m  \sigma_3) {\cV}  \sigma_3(D_0-m  \sigma_3)   \Pi_\pm  $ and then 
$$ \Pi_\pm p_{n}  U_{FW}  {\cV}  U_{FW}^* p_n  \Pi_\pm =\sqrt{t^\pm_{n}} \sqrt{\frac{1-t^\pm_{n}}{
{\nu_{n}^\pm}
}} \Pi_\pm p_{n}   \begin{pmatrix}   a^* W +  W^* a & 0  \\
0 & -( aW^* +  W a^* )
\end{pmatrix} p_n  \Pi_\pm .$$
For $q=0$, since $t_{0} = 1$, for the anti-diagonal matrix we have $T_{0}({\cV})=0$.  
Therefore by using that
$$
t^\pm_{n} \frac{1-t^\pm_{n}}{\nu_{n}^\pm} = \frac1{4 \nu_{n}^\pm} 
\Big( 1 -  \frac{m^2}{\nu_{n}^\pm+m^2}\Big)=  \frac{1}{4(\nu_{n}^\pm+m^2)}
$$
and that 
$$
 \mu_q = \begin{cases} \sqrt{\nu_{q}^+ + m^2} &q\geq 0\\
- \sqrt{\nu_{|q|-1}^- + m^2}  &q<0
\end{cases} ,
$$
the expressions for $T_{q}({\cV})$ for $q \neq 0$, follow by summing up the results for diagonal and anti-diagonal ${\cV}$. 
\end{proof}

\begin{remark}
The results above are also valid in the massless case $m=0$. In this scenario, the primary simplification is that $t_q=\frac12$, which will not significantly influence  the  spectral analysis. 
\end{remark}

%\subsection{Reduction to Toeplitz operators $p_0 v p_0$ and proof of Corollary  \ref{prop_inf}

\subsection{Eigenvalue distribution for the Toeplitz operators ${\mathcal P}_{q} {\cV} {\mathcal P}_{q}$}

As Proposition \ref{Diag} shows, the distribution of the eigenvalues of $D_0 + {\cV}$ near each $\mu_{q}$, $q \in \Z$ is  governed by the Toeplitz operators ${\mathcal P}_{q} {\cV}  {\mathcal P}_{q}$. This is  the  usual case for the  perturbation of isolated eigenvalues of infinite multiplicities.  Denote by $\left\{\nu_{k,q}^\pm({\cV})\right\}_{k \in \Z_+}$ the non-increasing (resp. non-decreasing) sequence of  positive (resp. negative) eigenvalues of ${\mathcal P}_{q} {\cV}  {\mathcal P}_{q}$. We denote  by ${\rm ess\,supp} \, {\cV}$ the union of the essential supports of $V_1$, $V_2$ {and $W$}.

\begin{proposition}\label{thm2}
	 	Under  conditions of Theorem \ref{thm1} (resp. of Theorem  \ref{theo1_b}) for any $q \in \Z^*$, we have
	\bel{15mai24}
	\liminf_{k \to \infty} \left(k! \nu_{k,q}^+({\cV})\right)^{1/k} \geq \frac{b\, \capa(K_-(\chi_\Omega w))^2}{2}	,
	\qquad
\limsup_{k \to \infty} \left(k! \nu_{k,q}^+({\cV})\right)^{1/k} \leq \frac{b\, \capa({\cK} )^2}{2}.
	\ee

	At the same time   
	\bel{6mai2024}
	\#\{\nu_{k,q}^-({\cV}) \}<\infty.
	\ee  
	For $q=0$, the above estimates are still true except that for the lower bound, under  conditions of  Theorem  \ref{theo1_b}, we have to replace $\chi_\Omega w$ by $\chi_{\Omega_1} w$ (and 
 in the upper bound, $\cK$ can be replaced by $\cK_1$, a compact set with Lipschitz boundary which contains ${\rm ess\,supp} \, V_1 \cup {\rm ess\,supp} \, W$).
\end{proposition}

In the next part of this  section we prove the previous proposition.   By combining Lemma \ref{lem1} with the estimates on the quadratic forms, we reduce the study of ${\mathcal P}_{q} V  {\mathcal P}_{q}$ to $T_{q} ({\cV})$ and then to Toeplitz operators of the form $p_0vp_0$. 
As used in previous works (see e.g. \cite{AhCa79,FiPu06}), we will exploits complex analysis tools. To {$(x_1,x_2) \in \R^2$  there} corresponds $z = x_1 + i x_2 \in \C$ and the differential operators $a$ and $a^*$ defined by \eqref{defa} are connected to the complex derivatives:
$$
\partial:= \partial_{x_1} - i \partial_{x_2}, \qquad \overline{\partial}:= \partial_{x_1} + i \partial_{x_2},
$$
by the relations:
$$
a = -i \left(\overline{\partial} + \frac{zb}2 \right)= -i e^{-b|z|^2/4} \overline{\partial} e^{b|z|^2/4}; \qquad 
a^* = -i \left({\partial} - \frac{\overline{z} b}2\right)= -i e^{b|z|^2/4} {\partial} e^{-b|z|^2/4}.
$$
Thus 
${\text Ker} (H_L-b)= {\text Ker} (a^*a) = {\text Ker} (a)$ is given by the functions $u \in L^2(\R^2)$ such that $f: z \mapsto e^{b|z|^2/4} u(z) $ is entire.
Then we consider the Fock space $\mathcal F$ of entire functions $f$ such that
$$
\|f\|_\mathcal{F}^2=\int_{\C}|f(z)|^2 e^{-b|z|^2/2}dm(z)<\infty ,
$$
in such a way that ${\text Ker} (H_L-b) = e^{-b|z|^2/4} \mathcal F  $ and 
$${\text Ker} (H_L-\Lambda_n)= {\text Ker} (a^*a - 2bn) = (a^*)^n {\text Ker}( a) = (a^*)^n \left( e^{-b|z|^2/4} \mathcal F  \right) . $$

%%%%%%%%%%%%%%%%%%%%

Then,  using that for $n \in \N$
$$a(a^*)^n p_0 = a a^* p_{n-1} = (a^*a + 2b) p_{n-1} = 2nb (a^*)^{n-1} p_{0}, \qquad a^n(a^*)^n p_0 = (2b)^n n! p_0,$$
we deduce that for ${\mathcal H}_n:= {\text Ker} (H_L-\Lambda_n)$, the operator 
$$ \sqrt{(2b)^n n! } \, (a^*)^n \; : {\mathcal H}_0 \longrightarrow {\mathcal H}_n,$$
is an isometry onto ${\mathcal H}_n$. Moreover, for 
$p_n$ the orthogonal projection onto  ${\mathcal H}_n$, and a function $v \in L^\infty(\R^2,\R)$, the quadratic form of the operator ${p_n v p_n}_{| {\mathcal H}_n}$ is unitarily equivalent to $\mathfrak a_n(v)$ defined on $\mathcal F$ by
\begin{align}
	\mathfrak a_n(v)[f]&:=\frac1{(2b)^n n!}\int_{\C} |(a^*)^n e^{-b|z|^2/4} f(z)|^2 v(z)dm(z),
\end{align}
and exploiting that $(a^*)^n (e^{-b|\cdot |^2/4} f)(z)= e^{-b|z|^2/4}  (a^*)^n (f )(z)$, we have
\begin{align}\label{defan}
	 \mathfrak a_n(v)[f]&=\frac1{(2b)^n n!} \int_{\C} |(a^*)^n  f(z)|^2 e^{-b|z|^2/2} v(z)dm(z)  \\
	&=\frac1{(2b)^n n!} \int|(2\partial-b\overline z)^nf|^2v(z)dm(z).
\end{align}
We also obtain that ${(2bn)^{-1} p_n a^* v  a p_n}_{| {\mathcal H}_n}$ (resp. ${(2bn)^{-1} p_n a v  a^* p_n}_{| {\mathcal H}_n}$) 
is unitarily equivalent to $\mathfrak a_{n-1}(v)$  (resp. $\mathfrak a_{n+1}(v)$). 

At last, for $w \in  L^\infty(\R^2,\C)$, the quadratic form of the operator ${\frac12 p_n (a^* w + w^* a) p_n}_{| {\mathcal H}_n}$
 (resp.  ${\frac12 p_n (a w^* + w a^*) p_n}_{| {\mathcal H}_n}$  ) 
 is unitarily equivalent to $\mathfrak b_n(w)$ (resp. $\mathfrak b_{n+1}(w)$) with $\mathfrak b_n$ defined on $\mathcal F$ by
\begin{align}\label{defbn}
	 \mathfrak b_n(w)[f]&= \frac1{(2b)^{(n-1)} (n-1)!}   {\rm Re}  \int_{\C} (a^*)^n  f (z) \overline{(a^*)^{n-1}  f (z)} e^{-b|z|^2/2} w(z) dm(z).
\end{align}

%Moreover, 
%{Now, in order to take into account the operator $p_n (a^* W + W a) p_n$, for $n \geq 1$ and $W \in L^\infty(\C, \C)$, let us also introduce the quadratic form
%\begin{align}\label{defbn}
%	\mathfrak b_n(W)[f]&= 2 {\rm Re}  \int_{\C} (a^*)^n  f (z) \overline{(a^*)^{n-1}  f (z)} e^{-b|z|^2/2} W(z) dm(z).
%\end{align}
%}
%
%Then,  using that for $n \in \N$
%$$a(a^*)^n p_0 = a a^* p_{n-1} = (a^*a + 2b) p_{n-1} = 2nb (a^*)^{n-1} p_{0},$$
 %using that $aa^*=H_L+b$ it is easy to see that 

For $T_{q} ({\cV})$, $q \in \Z$, introduced in Lemma \ref{lem1}, we deduce:

\begin{lemma}\label{lem1b}
 Let ${\cV}$  be defined by \eqref{defV} with  $V_1, V_2 \in L^{\infty}(\R^2,\R )$ and $W \in L^{\infty}(\R^2,\C )$. 
 For $q \in \Z$, the operator $T_{q} ({\cV})$,  introduced in Lemma \ref{lem1}, is unitarily equivalent to the operator in $\mathcal F$ given by the quadratic form 
 \begin{align*}
 a_0(V_1) &  & \text{ for } q=0 \\
 t_q \mathfrak a_q(V_1) & + (1-t_q) \mathfrak a_{q-1}({V_2}) + \frac1{\mu_q} \mathfrak b_q({W})  & \text{ for } q\geq 1 \\
 (1-t_q) \mathfrak b_{|q|}({V_1}) & + t_q \mathfrak a_{|q|-1}({V_2}) + \frac1{\mu_q}  \mathfrak b_{|q|} ({W}) & \text{ for } q \leq -1 
 \end{align*}
% $$ t_q \mathfrak a_q(V_1) + (1-t_q) a_{q-1}({V_2}) + \frac1{\mu_q} \mathfrak b_q({W}), \quad \text{ for } q\in\Z_+,$$ and 
%$$(1-t_q) a_{|q|}({V_1}) + t_q a_{|q|-1}({V_2}) + \frac1{\mu_q}  b_{|q|} ({W}), \quad \text{ for } q \leq -1,$$
with 
$t_{q} := \frac1{2} \Big( 1 +  \frac{m}{|\mu_q|}\Big)$.
\end{lemma}

%for $q \in \Z$, the operator $T_{q} ({\cV})$,  introduced in Lemma \ref{lem1}, is unitarily equivalent to the operator in $\mathcal F$ given by the quadratic form 
%{ $$\mathfrak a_q(\widetilde{V_1}) + a_{q-1}(\widetilde{V_2}) + \mathfrak b_q(\widetilde{W}^*), \quad \text{ for } q\in\Z_+,$$ and 
%$$a_{|q|}(\widetilde{V_1}) + a_{|q|-1}(\widetilde{V_2}) -  b_{|q|} (\widetilde{W}), \quad \text{ for } q \leq -1,$$
% with $\widetilde{V_1}$ (resp. $\widetilde{V_2}$, resp. $\widetilde{W}$) equal to $V_1$ (resp. $V_2$, resp. $W$) up to a positive multiplicative constant. }
%

%The following lemma, inspired by \cite{FiPu06}, plays an important role regarding the analysis of $T_{q} ({\cV})$ and the proof of the main results of this article. 

Now let us analyze, theses quadratic forms.

\begin{lemma}\label{lemTq}
Let $V_1$, $V_2$, $W$ be compactly supported functions in $L^\infty(\R^2)$ and $q \geq 1$. 

%\begin{enumerate}
%\item 
Under the assumptions of Theorem \ref{thm1} (resp. of Theorem \ref{theo1_b}), there exist a constant $C_1>0$ and a subspace of $\mathcal F$ of finite codimension, on which we have:
\bel{lemlower}
 \mathfrak a_q(V_1) + \mathfrak a_{q-1}(V_2) + { \mathfrak b_q(W) }   \geq C_1 \mathfrak a_0(\chi_\Omega w).
 \ee

\end{lemma}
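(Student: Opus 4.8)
\textbf{Proof plan for Lemma \ref{lemTq}.}
The plan is to work entirely on the Fock space side and to compare the quadratic forms $a_q(V_1)+a_{q-1}(V_2)$ and $a_0(\chi_\Omega w)$ directly, rather than through the Toeplitz operators themselves. The key idea is that a lower bound of the form $a_q(V_1)+a_{q-1}(V_2)\geq C_1\,a_0(\chi_\Omega w)$ on a finite-codimension subspace is equivalent, by the Glazman lemma / min-max, to the statement that $C_1\,p_0(\chi_\Omega w)p_0 - T_q(V)$ (after the unitary identifications) has only finitely many positive eigenvalues, i.e. that $T_q(V)$ dominates a positive multiple of $p_0(\chi_\Omega w)p_0$ up to a finite-rank error. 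So the real content is: (i) discard the nonpositive-looking pieces of $V$ cheaply, and (ii) bound the genuinely positive piece of $V$ from below by $\chi_\Omega w$ times an identity matrix, and then push that lower bound through the $a_q(\cdot)$ forms.

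First I would reduce to the model lower bound $V\geq \chi_\Omega w\,\mathrm{diag}(1,-C_2)$ (Theorem \ref{thm1}), resp.\ $V\geq \mathrm{diag}(\chi_{\Omega_1}w,\chi_{\Omega_2}w) - C\chi_K I_2$ (Theorem \ref{theo1_b}), resp.\ the hypothesis of Corollary \ref{corV}. Since $a_q$ and $a_{q-1}$ are monotone in the weight, it suffices to prove the claim when $V$ equals the right-hand side of these inequalities. For the $-C_2\chi_\Omega w$ term in the second slot under Theorem \ref{thm1}'s hypotheses, the point is that $a_{q-1}(-C_2\chi_\Omega w)\geq -|C_2|\,\|w\|_\infty\, a_{q-1}(\chi_\Omega)$, and $a_{q-1}(\chi_\Omega)$ is comparable to $a_q(\chi_\Omega)$ on a finite-codimension subspace via the relation $a(a^*)^n p_0 = 2nb(a^*)^{n-1}p_0$ recorded just above; this lets one absorb the negative part into a small fraction of $a_q(V_1)$. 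So the heart of the matter is the positive diagonal case: showing
\[
 a_q(\chi_\Omega w) \geq C_1\, a_0(\chi_\Omega w)
\]
on a subspace of finite codimension, i.e. that $p_q(\chi_\Omega w)p_q$ and $p_0(\chi_\Omega w)p_0$ have comparable essential behavior near $0$. This is exactly where Lemma \ref{lem2} / the $\cite{BrRa20}$ machinery enters: $a_q(\chi_\Omega w)$ is unitarily equivalent to $p_0 v_q p_0$ with $v_q = \mathrm{L}^{(0)}_q(-\Delta/2b)(\chi_\Omega w)$ (up to constants), and one needs $p_0 v_q p_0 \geq C_1 p_0 (\chi_\Omega w) p_0$ modulo finite rank. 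Alternatively, and more robustly since $\chi_\Omega w$ need not be smooth, one argues directly on $\mathcal F$: for $f$ in the Fock space one has $(a^*)^q f = (\text{const})\, e^{b|z|^2/4}\partial^q(e^{-b|z|^2/4}f)$, and one compares $\int |(a^*)^q f|^2 e^{-b|z|^2/2}\chi_\Omega w\,dm$ with $\int|f|^2 e^{-b|z|^2/2}\chi_\Omega w\,dm$.

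The technical engine for the comparison is a local Bernstein-type / elliptic estimate: on the open set $\Omega$ where $w>0$ (and using that $w$ is a genuine weight so that $H^1(\Omega;w)\hookrightarrow L^2(\Omega;w)$ is compact, or continuous and positive in the Theorem \ref{theo1_b} setting), the map $f\mapsto (a^*)^q f$ restricted to $\Omega$ is, modulo a finite-dimensional subspace of $\mathcal F$, bounded below in the $L^2(\Omega;w)$ norm by $\|f\|_{L^2(\Omega;w)}$ — because if $(a^*)^q f$ vanished on an open subset then $f$ would be a polynomial-type solution of an ODE forcing it into a finite-dimensional space, and the compact embedding upgrades ``injective modulo finite dimension'' to a uniform bound. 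Concretely: suppose not; then there is a sequence $f_j\in\mathcal F$, orthogonal to a growing family, with $\|f_j\|_{L^2(\Omega;w)}=1$ but $\|(a^*)^q f_j\|_{L^2(\Omega;w)}\to 0$; using the weighted compact embedding and interior elliptic regularity for the (hypoelliptic, constant-coefficient) operator behind $a^*$, extract a limit $f$ with $(a^*)^q f = 0$ on $\Omega$, hence $f$ entire with $\partial^q(e^{-b|z|^2/4}f)=0$ on $\Omega$, hence (by analyticity) $e^{-b|z|^2/4}f$ a polynomial of degree $<q$ on all of $\C$ — a finite-dimensional contradiction. This is the step I expect to be the main obstacle, because one must handle the low regularity of $w$ and $\chi_\Omega$, make the ``finite codimension'' uniform, and correctly invoke the encircling hypothesis in the Theorem \ref{theo1_b} case (there the Jordan curve $\Gamma\subset\Omega$ is what guarantees that the negative set $K$ sits inside a region where the positive part $\chi_{\Omega_1}w + \chi_{\Omega_2}w$ is ``large enough'' in the capacity sense — one uses a maximum-principle / harmonic-majorant argument to dominate $\chi_K$ by the positive part on the relevant subspace). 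Once the positive diagonal comparison is in hand, reassembling — absorb the negative parts, collect constants into $C_1$, take the intersection of the finitely many finite-codimension subspaces — is routine.
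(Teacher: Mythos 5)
Your high-level intuition — reduce to a Fock-space quadratic-form comparison, exploit the weighted compact embedding to shed finitely many dimensions, then control the negative pieces — matches the paper's. But the actual mechanism you propose diverges from the paper's proof in several places, and two of those places have genuine gaps.

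First, the core comparison. You propose a compactness-by-contradiction argument: take a normalized minimizing sequence $f_j$ with $\|(a^*)^q f_j\|_{L^2(\Omega;w)}\to 0$, extract a limit $f$ with $(a^*)^q f=0$ on $\Omega$, and conclude a finite-dimensional kernel. This has a gap: the only normalization you impose is $\|f_j\|_{L^2(\Omega;w)}=1$, which bounds neither $\|f_j\|_{\mathcal F}$ nor $\|f_j\|_{H^1(\Omega;w)}$, so the weighted compact embedding and ``interior elliptic regularity'' you invoke don't by themselves produce a strongly convergent subsequence. Also, your stated kernel conclusion (``$e^{-b|z|^2/4}f$ a polynomial of degree $<q$ on all of $\C$'') is off: since $(\partial-\bar z)^q f=\sum_k\binom{q}{k}(-\bar z)^{q-k}\partial^k f$ and $f$ is holomorphic, vanishing on an open set forces each $\bar z$-coefficient, in particular $f$ itself, to vanish; the kernel is $\{0\}$, not a $q$-dimensional space. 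The paper avoids both issues by a direct, quantitative chain: the compact embedding yields, on a finite-codimension subspace of $\mathcal F$, the estimate $\|\partial^{k-1}f\|_{L^2(\Omega;w)}\leq\gamma\|\partial^k f\|_{L^2(\Omega;w)}$ for $k=1,\dots,q$, and iterating this simultaneously (i) absorbs the $(\partial-\bar z)^{q-1}$ piece coming from $V_2$ into a small multiple of the leading term, and (ii) bounds the leading term below by $\|f\|^2_{L^2(\Omega;w)}=a_0(\chi_\Omega w)[f]$. No passage to a limit, no elliptic regularity, no finite-dimensional kernel argument is needed.

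Second, the Theorem~\ref{theo1_b} (encirclement) case. You gesture at ``maximum principle / harmonic majorant'' to dominate $\chi_K$; the paper's mechanism is different and more concrete: since every $f\in\mathcal F$ is holomorphic, the Cauchy formula expresses $\partial^k f$ on $K$ as an integral over the Jordan curve $\Gamma\subset\Omega$ with a smooth kernel, so $\partial^k:\mathcal F\cap L^2(\Gamma,dl)\to L^2(K,dm)$ is compact, and $\int_\Gamma|f|^2\,dl$ is in turn dominated (up to a constant depending on the positive lower bound of $w$ near $\Gamma$) by $\int_\Omega|f|^2 w\,dm$. This gives, on a finite-codimension subspace, $a_q(\chi_K)+a_{q-1}(\chi_K)\leq\varepsilon\,a_0(\chi_\Omega w)$ with $\varepsilon$ as small as desired, which is exactly what is needed to absorb $-C\chi_K$.

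Third, two smaller points. Your claim that $a_{q-1}(\chi_\Omega)$ is comparable to $a_q(\chi_\Omega)$ ``via the relation $a(a^*)^n p_0=2nb(a^*)^{n-1}p_0$'' does not hold; that identity is purely algebraic and says nothing about comparison of the truncated quadratic forms on $\Omega$. The comparison requires the compact embedding as above. And your parenthetical appeal to Lemma~\ref{lem2} / the $p_0 v_q p_0$ reduction is not available here, precisely for the reason you yourself note: $\chi_\Omega w$ need not be smooth, and the paper deliberately argues directly on the quadratic forms in this lemma.
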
  
\begin{proof}
%Following \cite{FiPu06}, 
We assume for simplicity  $b=2$. 
{ 
From the Cauchy-Schwarz inequality we have the estimate, for any $\delta >0$:
\bel{CSba}
| \mathfrak b_q(W)[f] | \leq \delta \mathfrak a_q(|W|)[f] + \frac{1}{\delta} \mathfrak a_{q-1}(|W|)[f].
\ee
}
So, under the assumption  { $V_1 \geq \chi_\Omega w$, $ V_{2} \geq - C_2 \chi_\Omega w $ and $|W| \leq C_3 \chi_\Omega w$, there exist $C_1>0$ and $\widetilde{C}_2>0$ such that for any $f \in \mathcal F$, }
\begin{align}
\begin{split}
\mathfrak a_q(V_1)[f] + & \mathfrak a_{q-1}(V_2)[f] + { \mathfrak b_q(W)[f] }\geq  \\ \nonumber
& C_1\int_{\Omega} |(\partial -\overline z)^q  f(z)|^2 w(z)dm(z)
	-\widetilde{C}_2 \int_{\Omega} |(\partial -\overline z)^{(q-1)}  f(z)|^2 w(z)dm(z).
	\end{split}
\end{align}

Since $w$ is a  weight for $\Omega$,  $H^1(\Omega;w)$ is compactly embedded in $L^2(\Omega;w)$, and therefore for any $\gamma>0$ there exists a subspace  of finite codimension in $H^1(\Omega;w)$ such that
$
\|f\|_{L^2(\Omega;w)}\leq \gamma\|\nabla f\|_{L^2(\Omega;w)}
$. 
 Since {for $f \in \mathcal F$  we have that %$f_{|\Omega} \in  H^1(\Omega;w)$, 
for any $k \in \Z_+$, $\partial^k f_{|\Omega}  \in H^1(\Omega;w)$}, %$\partial^k f_{|\Omega} \in  H^1(\Omega;w)$}, 
\Bk
there exists a subspace $N\subset \mathcal{F}$ of a finite codimension $l$ such that 
\bel{eq0}
\|\partial^{k} f\|_{L^2(\Omega;w)}\leq \gamma \|\partial^{k+1} f\|_{L^2(\Omega;w)}\qquad f\in N, \quad k=0,1,2,...,q-1.
\ee
 Then, for any $f\in N$ the contributions of the derivatives of order  less than $q$ are controlled by the $q$-th derivative: \Bk
$$\mathfrak a_q(V_1)[f] + \mathfrak a_{q-1}(V_2)[f] + { \mathfrak b_q(W)[f] } \geq C_1  \|\partial ^qf\|^2_{L^2(\Omega;w)}  - C \sum_{k=1}^{q} \|\partial^{q-k}f\|_{L^2(\Omega;w)}^2
	\geq C_\gamma  \|f\|^2_{L^2(\Omega;w)}$$
with some $C, C_\gamma >0$ if we take $\gamma $ sufficiently small. This implies (\ref{lemlower}) when ${\cV}$ satisfies the assumptions of Theorem \ref{thm1}. 

When $V_i$ , $i=1,2$, satisfy  $V_i \geq \chi_{\Omega_i} w - C \chi_K$, by linearity and monotonicity of $ V \mapsto \mathfrak a_q(V)$ we have: 
\begin{equation}\label{eq1a}
\mathfrak a_q(V_1)+  \mathfrak a_{q-1}(V_2) \geq \mathfrak a_q(\chi_{\Omega_1} w) +\mathfrak a_{q-1}(\chi_{\Omega_2} w)- C \mathfrak a_q(\chi_K) - C  \mathfrak a_{q-1}(\chi_K). 
 \end{equation} 
 {Moreover using again \eqref{CSba}, for $|W | \leq C_3 \chi_{\Omega_1} w + C_4 \chi_K$ we have 
 \begin{equation}\label{eq1b}
\mathfrak a_q(V_1)+ \mathfrak a_{q-1}(V_2) +  \mathfrak b_q(W) \geq \frac12 \mathfrak a_q(\chi_{\Omega_1} w) +\mathfrak a_{q-1}(\chi_{\Omega_2} w)- C \mathfrak a_q(\chi_K) - C\mathfrak a_{q-1}(\chi_K), 
 \end{equation} 
 with another $C>0$.}
 
Now we show that the encirclement property allows to absorb the negative contributions by following  
ideas of the proof of Theorem 1.1 of \cite{PuRo11}. First, since $w$ (resp. $w_{|\Omega_1}$) is a weight in $\Omega$ (resp. $\Omega_1$), as above, for $q \geq 1$, on a subspace  of finite codimension in $\mathcal F$, we have 
\begin{equation}\label{eq2}
\mathfrak a_q(\chi_{\Omega_1} w) +\mathfrak a_{q-1}(\chi_{\Omega_2} w) \geq 
C_1\mathfrak a_{q-1}(\chi_{\Omega_1} w) +  \mathfrak a_{q-1}(\chi_{\Omega_2} w) \geq
C_0  \mathfrak a_{q-1}(\chi_{\Omega} w) \geq
%C_0 \Big(a_0(\chi_{\Omega_1} w) + a_{0}(\chi_{\Omega_2} w) \Big) \geq 
c_0 \mathfrak a_{0}(\chi_{\Omega}w),
 \end{equation} 
 for some positive constants $C_1$, $C_0$, $c_0$, where we exploit that $\Omega_1 \cup \Omega_2 = \Omega$ and then $\chi_{\Omega_1} + \chi_{\Omega_2} \geq \chi_{\Omega}$.

 By assumption, there exist a Jordan curve $\Gamma$ contained in { $\Omega$, whose} interior contains
 $K$. Let $\mathcal U_i$ be the interior part of $\Gamma$, and let  $\mathcal U_e$ be the intersection of the exterior of $\Gamma$ with $\Omega$.
 
Then, thanks to the Cauchy formula, for any $k= 1, \cdots, q$, the operator $\partial^k:{ \mathcal F}\cap L^2(\Gamma, dl)\to L^2(K, dm)$ has    an integral kernel $K_k(z, \zeta)$  smooth on $K \times \Gamma$ such that
 $$\partial^k f(z) = \int_\Gamma K_k(z,\zeta) f (\zeta) dl(\zeta), \quad z \in K \subset \mathcal U_i $$
 where $dl(\zeta)$ is the length measure on the curve $\Gamma$. Thus, this  integral operator is compact (with smooth bounded integral kernels), then for any 
 %compact $K \subset \mathcal U_i$ and any  
 $\varepsilon >0$, there exists $\mathcal L_\varepsilon \subset \mathcal{F}$, a finite dimensional subspace of $\mathcal{F}$, on which 
\begin{equation}\label{eq3}
\mathfrak a_q(\chi_K)[f] +\mathfrak  a_{q-1}(\chi_K)[f] \leq \varepsilon \int_\Gamma  | f (\zeta) |^2  dl(\zeta) \leq \frac{\varepsilon}{  \eta} \int_{\Omega}  | f (\zeta) |^2 w(\zeta) dm(\zeta),
 \end{equation} 
 with $\eta > 0 $ depending on   the weight $w$ and  the thickness of a  neighborhood of $\Gamma$ included in the open set $\Omega$ (we also have used that $w$ admits a positive lower bound on this set).
 
 By choosing $\varepsilon$ sufficiently small, combining inequalities \eqref{eq1b}, \eqref{eq2} and \eqref{eq3}, we deduce (\ref{lemlower}) under the assumption of Theorem \ref{theo1_b}.
 
% The proof of the lower bound under the assumption of Corollary \ref{corV} is a slight modification of the previous one. In \eqref{eq1}, we just have to replace $\mathfrak a_q(\chi_{\Omega_1} w)$ by $\mathfrak a_q(\chi_{\Omega_1} w)- C_2 a_{q-1}(\chi_{\Omega_1} w) $ which admits a lower bound $C_1 a_{q-1}(\chi_{\Omega_1} w)$ on a subspace  of finite codimension in $\mathcal F$ (because $\gamma$ can be choosen sufficiently small in \eqref{eq0}). 
\end{proof}

{
\begin{remark}\label{rmqV1+}
This  proof exploits the result of Lemma \ref{lem1} and the expression of $T_{q} ({\cV})$. It shows that {when $V_1$ and $V_2$ have the same support,} it is the contribution of $p_q V_1 p_q$ that dominates and compensates for any negative contribution of $V_2$. This occurs despite the fact that the behavior of the eigenvalue counting functions of $p_q V_1 p_q$ and $p_q a^* V_2 a p_q$ are similar. {The assumptions are also made such that the contribution of $W$ is absorbed by $p_q V_1 p_q$  (see \eqref{CSba}). \Bk}
\end{remark}
}

      \subsection{Proof of Proposition \ref{thm2} 
      %under the hypothesis of Theorem \ref{thm1}
      } %$p_q \one_K p_q$}
\label{s4}
\begin{proof} {Thanks to Lemma \ref{lem1}, the non-zero eigenvalues of ${\mathcal P}_{q} {\cV}  {\mathcal P}_{q}$ are those of $T_{q} ({\cV})$. For $q=0$, $T_0({\cV}) = p_0 V_1 p_0$ then the result is a direct consequence of { \cite[Lemmas 1 and 2]{FiPu06}}. For $q \neq 0$, the proof is a modification of these lemmas.}  By applying Lemma \ref{lem1b}, we are reduced to study the operator, on the Fock space $\mathcal F$, given by the quadratic form
$$\mathfrak a_{|q|}({V_1}) +\mathfrak a_{|q|-1}({V_2}) + \frac1{\mu_q} \mathfrak b_{|q|} ({W}),$$
with $\mathfrak a_n(V)$ and $\mathfrak b_q(W)$ defined by \eqref{defan} and   \eqref{defbn} (here we include the positive coefficients $t_q$ and $(1-t_q)$ in $V_1$ or $V_2$).
%
%
% Let us first consider the case $q\in\Z_+$. In order to use the results of the previous sections, let us work with
%%The proof is a modification of { \cite[Lemmas 1 and 2]{FiPu06}}. Let us consider 
%the Fock space $\mathcal F$ and the quadratic forms $\mathfrak a_q(V)$ \eqref{defan},  $\mathfrak b_q(W)$ \eqref{defbn}. 
%%of entire functions $f$ such that
%%$$
%%\|f\|_\mathcal{F}^2=\int_{\C}|f(z)|^2 e^{-b|z|^2/2}dm(z)<\infty 
%%$$
%Then, using \eqref{au73} and \eqref{au74}, 
%%that $aa^*=H_L+b$ it is easy to see that 
%$T_{q} ({\cV})$ is unitarily equivalent to the operator in $\mathcal F$ given by the quadratic form 
% $\mathfrak a_q(\widetilde{V_1}) + a_{q-1}(\widetilde{V_2})+ \mathfrak b_q(\widetilde{W}^*)$, with
%$
%\widetilde{V_1} = t_q(2b)^{-q}(q!)^{-1} V_1$,  $\widetilde{V_2} = (1-t_q)(2b)^{-q+1}((q-1)!)^{-1} V_2 
%$, $\widetilde{W} =(2 \mu_q)^{-1}(2b)^{-q}(q!)^{-1}W$.
Let us start with the lower bound. Thanks to Lemma \ref{lemTq}, on $N \subset \mathcal F$ of finite codimension, we have
$$\mathfrak a_{|q|}({V_1})[f]  +\mathfrak a_{|q|-1}({V_2})[f] + \frac1{\mu_q}  \mathfrak b_q({W})[f] \geq C_1 \int_{\Omega} | f(z)|^2 w(z)dm(z).$$

Then the number of negative eigenvalues is at most finite, which implies in particular that \eqref{6mai2024} holds. Moreover as in the proof of the lower bound in { \cite[Lemma 1]{FiPu06}} we obtain that for each $\varepsilon>0$ and $k$ large enough
$$
\left(k! \nu_{k,q}^+(V)\right)^{1/k}\geq (1+\varepsilon)^2 M_k(w)^{1/k},
$$
where, for ${\bf P}_k$ the set of all monic polynomials in $z$ of degree $k \in \N$,
$$
M_k(w):=\inf_{p \in {\bf P}_k}\int_{\C} |p(z)|^2 w(z)dm(z).
$$ 
Now, to finish the proof of the lower bound we notice that %for $$
%K_-(w):=\{ z\in\Omega: \limsup_{r\downarrow 0} \frac{\log \int_{|z-\zeta|\leq r}w(\zeta)d\zeta}{\log r}<\infty\}%=\{z\in \Omega: w(z)\neq0\}
%$$
%we have 
$$
\liminf_{k \to \infty}M_k(w)^{1/k}\geq%\frac{b\, \capa(\{z\in\Omega: w(z)\neq0\})^2}{2}=
\frac{b\, \capa(K_-(w))^2}{2}
$$
(see (1.15)-(1.16) in \cite{FiPu06}).

{For the proof of the upper bound we use that a.e. $V_1,  V_2, |W| \leq C\chi_{ \cK}$ and 
we combine \eqref{CSba} with the Cauchy integral formula. Let $\cK_\delta:= \{ z \in \C; \; {\rm dist} (z, \cK) \leq \delta \}$, $\delta >0$.  For any $f \in \mathcal F$ and $ k \in \{0, \cdots , q\}$, there exist a positive constant $C_k$ such that 
$$\sup_{\cK} | \partial^k f | \leq \frac{C_k}{\delta^k} \sup_{\cK_\delta} | f |,$$
and there exists $C_\delta >0$ such that 
$$\mathfrak a_{|q|}({V_1})[f]  +\mathfrak a_{|q|-1}({V_2})[f] + \frac1{\mu_q}  \mathfrak b_q({W})[f] \leq C_\delta \int_{\cK_\delta} | f(z)|^2 dm(z).$$
Then following the proof of the upper bound  in  {\cite[Lemma 2]{FiPu06}}, we deduce 
$$ \limsup_{k \to \infty} \left(k! \nu_{k,q}^+({\cV})\right)^{1/k} \leq \frac{b\, \capa({\cK_\delta} )^2}{2},$$
and we conclude by using that for the Lipschitz set $\cK$,  $\capa({\cK_\delta} )$ tends to $\capa({\cK} )$ as $\delta \searrow 0$. }

%The case $q<0$ is similar. Indeed, we have to study the  eigenvalues of $${ \alpha_{q}} {  p_{|q|-1} V_2 p_{|q|-1} } +   \beta_{q} {  p_{|q|-1} aV_1 a^*  p_{|q|-1} } - { \gamma_{q}} {  p_{|q|-1} (a W^* + W a^*) p_{|q|-1} },$$ for some positive constants $ \alpha_{q}$, $ \beta_{q}$, $\gamma_{q}$.  Since this operator is unitarily equivalent to 
%$$
%{ \alpha_{q}} {  p_{|q|} a^*V_2 a p_{|q|} } +  \beta_{q}  {  p_{|q|} V_1   p_{|q|} } - \gamma_{q} p_{|q|}( a^*W + W^* a) p_{|q|}  ,
%$$
%then we follow the same methods used for the case $q\in\Z_+^*$.

\end{proof}

\subsection{Proof of Theorem \ref{thm1} and Theorem \ref{theo1_b}}\label{s42bis}
Starting from Proposition  \ref{Diag} we are reduced to  the study   of   the eigenvalues of  $\mathcal P_q {\cV}_\epsilon^\pm \,\mathcal P_q$ %$n_+(\lambda; \mathcal P_q V_\epsilon^\pm \,\mathcal P_q)$, 
where 
$$
	{\cV}_\epsilon^\pm :=\begin{pmatrix}
	V_1\pm\epsilon(|V_1|+|W|)&W^*\\
	W&V_2\pm\epsilon(|V_2|+|W|)
\end{pmatrix}  .
	$$  
Thus the proof consists first to check that the assumptions on ${\cV}$ are still true for ${\cV}_\epsilon^\pm$ with $\epsilon >0$ sufficiently small. { First we obviously have
${\rm ess\,supp} \, {\cV}_\epsilon^\pm \subset {\rm ess\,supp} \, {\cV} \subset \cK.$
Then under the condition of Theorem \ref{thm1}, $V_1$ is non-negative then   $V_1+\epsilon(|V_1|+|W|)\geq  (1+\epsilon) V_1 \geq \chi_\Omega w (1+\epsilon)$. On the other side,  
the condition $|W|\leq C_3\chi_\Omega \omega$
 implies  
$$
V_1-\epsilon(|V_1|+|W|)\geq  (1-\epsilon) V_1- C_3\chi_\Omega \omega\geq \chi_\Omega w (1-\epsilon(1+ C_3)).
$$ \Bk
A lower bound  of  $V_2\pm\epsilon(|V_2|+|W|)$ \Bk by $-C_\epsilon \chi_\Omega w $ still holds for some $C_\epsilon \in \R$.

\Bk
Concerning the assumptions of Theorem  \ref{theo1_b}, without loss of generality we can assume that $\Omega \subset \R^2\setminus K$. Consequently if $V_1$ and $V_2$ are bounded and, on  $\Omega$, satisfy $V_i \geq  \chi_{\Omega_i} w$, $i=1,2$, then on  $\Omega$, $V_i \pm\epsilon|V_i| = (1 \pm \epsilon) V_i \geq (1 \pm \epsilon) \chi_{\Omega_i} w $ and ${\cV}_\epsilon^\pm $ still satisfy (\ref{V=}). 
%Regarding Corollary \ref{corV}
%%(\ref{V=>})
% there is no change for $V_1$. For $V_2$ on $\Omega$, we just use that $V_2 \geq w$ on $\Omega \setminus \Omega_1$ implies 
%$V_2\pm\epsilon|V_2| \geq (1 \pm \epsilon) V_2$ and on $\Omega_1$, we have $$V_2 - \epsilon|V_2| = V_{2,+} (1- \epsilon) - V_{2,-} (1+ \epsilon) \geq - C_2 (1+ \epsilon) w,$$
%where for a mesurable function $v$, $v_+ := \max (0, v)$ (resp. $v_- := \max (0, -v)$) denotes the positive (resp. negative part) of $v$.  

% This potential satisfies the conditions  of  Theorem  \ref{thm2} with obvious modifications. 
Then we can apply Proposition \ref{thm2}  to  ${\cV}_\epsilon^\pm$ and  obtain the corresponding asymptotics \eqref{15mai24},  \eqref{6mai2024} for  the eigenvalues of  $\mathcal P_q {\cV}_\epsilon^\pm \,\mathcal P_q$.
Now,  following the proof of \cite[Corollary 5.11]{BrRa20} we deduce the asymptotic expansions of the counting functions of Theorem \ref{thm1} and  Theorem \ref{theo1_b}. Finally, thanks to Proposition \ref{Diag}, we treat the case when $-{\cV}$ satisfies (\ref{V1>}) or (\ref{V=}) by interchanging ${\mathcal N}_+^q(\lambda)$ and ${\mathcal N}_-^q(\lambda)$ because $n_-(s; T) = n_+(s, -T)$.

\subsection{Reduction to Toeplitz operators $p_0 v p_0$ for smooth potentials
\label{sinfty}}

%For $q \in \Z^*$, $\tilde{q}$ denotes the following natural number $\tilde{q}:= q$ if $q > 0$, $\tilde{q}:= -q+1$ if $q <0$. 
For a smooth potential ${\cV}$, let us give a direct consequence of  {\cite[Appendix A.1]{BrRa20}}, which allows us to {pass from the study of  the  Toeplitz operators $\mathcal P_{q}{\cV}   \mathcal P_{q}$ to the study of the  Toeplitz operator $p_0 v_q ({\cV}  )p_0$.} %, $v_q({\cV})$ being an explicit function involving polynomials of the Laplacian $\Delta:= \partial_{x_1}^2 + \partial_{x_2}^2$ applied to the elements of ${\cV}$. 
From this reduction, we will deduce {Corollary }\ref{cor26} and {Corollary }\ref{prop_inf}.%, Proposition \ref{Vsmooth} and Corollary \ref{cormag}.

\begin{lemma}[\cite{BrRa20}]\label{lem2}
Let $q \in \Z^*$ and $T_{q} ({\cV})$ be defined in Lemma \ref{lem1}. Assume that the functions $V_1$, $V_2$ and $W$ are 
smooth compactly supported functions.
%respectively of class $C_b^{2\tilde{q}}(\R^2)$, $C_b^{2(\tilde{q}-1)}(\R^2)$, and  $C_b^{2q-1}(\R^2)$.
Then there exists an unitary operator ${\mathcal W_q}$ such that :
%for any  $q \in \Z_+$:
$$T_{q}({\cV})  =  {\mathcal W_q}^*  p_0 v_q({\cV})  p_0 {\mathcal W_q} ,$$
with $v_q({\cV})$ being the smooth function given {by \eqref{defvq}}.
%for $q>0$ by 
%%$t_{q} := \frac1{2} \Big( 1 +  \frac{m}{|\mu_q|}\Big)$.
% \begin{align*} 
%    v_q({\cV}) &: =  t_q {\rm L}^{(0)}_{{q}} 
%    \left(-\frac{\Delta}{2b}\right)
%     V_{1}  
%    + 
%    (1-t_q) {\rm L}^{(0)}_{{q}-1}\left(-\frac{\Delta}{2b}\right) V_2  + \frac1{\mu_q}  {\rm L}^{(1)}_{{q-1}} \left(-\frac{\Delta}{2b}\right) \widetilde{b}     \end{align*}
%    and for $q<0$,
%    \begin{align*} 
%    v_q({\cV}) &: = (1- t_q) {\rm L}^{(0)}_{|q|}\left(-\frac{\Delta}{2b}\right) V_{1}  + 
%    t_q {\rm L}^{(0)}_{|q|-1}\left(-\frac{\Delta}{2b}\right) V_2 + \frac1{\mu_q}  {\rm L}^{(1)}_{{|q|-1}} \left(-\frac{\Delta}{2b}\right) \widetilde{b}     \end{align*}
%    where
%    $$
%      {\rm L}^{(m)}_q(t) : = \sum_{j=0}^q \binom{q+m}{q-j} \frac{(-t)^j}{j!}, \quad t \in \re, \quad q \in \Z_+,  \quad m=0,1
%      $$
%     are the Laguerre polynomials and $\widetilde{b}$ is the amplitude of the magnetic field associated with the magnetic potential $(- {\rm Re} W,  - {\rm Im } W)$ : $$\widetilde{b} :=  \partial_{x_2} {\rm Re} W  - \partial_{x_1} {\rm Im }W .$$
\end{lemma}
\begin{proof}
For
       \bel{au73}
       \varphi_{k,0}(x) : = \sqrt{\frac{b}{2\pi k!}} \left(\frac{b}{2}\right)^{k/2} z^k e^{-b|x|^2/4}, \quad x \in \rd, \quad z = x_1 + i x_2, \quad k \in \Z_+,
       \ee
        \bel{au74}
       \varphi_{k,n}(x) : = \sqrt{\frac{1}{(2b)^n n!}}(a^*)^n\varphi_{k,0}(x), \quad x \in \rd, \quad k \in \Z_+, \quad n \in  \N,
       \ee
      the family $\left\{\varphi_{k,n}\right\}_{k \in \Z_+}$ is an orthonormal basis of $p_n L^2(\rd)$ called sometimes {\em the angular momentum basis}
       (see e.g. \cite{RaWa02} or \cite[Subsection 9.1]{BrPuRa04}). Exploiting that for $k \in \Z_+$ it satisfies
       \bel{37}
       a^* \varphi_{k,n} = \sqrt{2b(n+1)}  \varphi_{k,n+1}, \quad n \in \Z_+,
       \quad a \varphi_{k,n} = \left\{
       \begin{array} {l}
       \sqrt{2bn}  \varphi_{k,n-1}, \quad n \geq 1,\\
       0 , \quad n = 0,
       \end{array}
       \right.
       \ee
the following property is proved in \cite[Appendix A.1]{BrRa20}:
For $
    g_0 = I$,  $g_1 = a^*$, $ g_2 = a$,
     and $\{ w_{jk}\}_{j,k \in \{0,1,2\}}$ smooth bounded functions such that $w_{jk}^* = w_{kj}$, the operator
     $$p_n \left(\sum_{j,k=0}^2 g_j^* w_{jk} g_k\right) p_n
    $$
    with domain $p_n L^2(\re^2)$ is unitarily equivalent to the operator $ p_0 \upsilon_n p_0$, self-adjoint on its domain $p_0 L^2(\rd)$, 
     where for $n \geq 1$, $\upsilon_n : \rd \to \re$ is the bounded multiplier given by
     \begin{align} \label{f14}
     \upsilon_n &: = {\rm L}_{n}\left(-\frac{\Delta}{2b}\right) w_{00} + 2b(n+1) {\rm L}_{n+1}\left(-\frac{\Delta}{2b}\right) w_{11} + 2bn {\rm L}_{n-1}\left(-\frac{\Delta}{2b}\right) w_{22}\\[3mm] \nonumber
    &- 8 {\rm Re}\,{\rm L}_{n-1}^{(2)}\left(-\frac{\Delta}{2b}\right){\partial_z^2 w_{21}}
    - 4 {\rm Im}\,{\rm L}_{n}^{(1)}\left(-\frac{\Delta}{2b}\right){\partial_z w_{01}}
    - 4 {\rm Im}\,{\rm L}_{n-1}^{(1)}\left(-\frac{\Delta}{2b}\right){\partial_z w_{20}},
    \end{align}
    where
%   $$
%      {\rm L}_q^{(m)}(t) : = \sum_{j=0}^q \binom{q+m}{q-j} \frac{(-t)^j}{j!}, \quad t \in \re, \quad q \in \Z_+, \quad m \in \Z_+,
%      $$
%    are the Laguerre polynomials and 
 ${\rm L}_n := {\rm L}_n^{(0)} $. 
    %If $q=0$, then
%      \bel{f16}
%      \upsilon_0 : =  w_{00} + 2b {\rm L}_{1}\left(-\frac{\Delta}{2b}\right) w_{11}  - 4 {\rm Im}\,{\partial_z w_{01}} .
%      \ee

By applying this result to $n=q>0$ and to
$$ w_{00} = t_q V_1, \quad w_{22} = \frac{1-t_q}{2bq} V_2 , \quad w_{20} = \frac1{2\mu_q} W , \quad w_{11}=w_{21}=w_{01}=0,$$
we obtain the claimed result for $q>0$. For $q<0$, we take $n=|q|-1$ and 
$$ w_{00} = t_q V_2, \quad w_{11} = \frac{1-t_q}{2b|q|} V_1 , \quad w_{01} = \frac1{2\mu_q} W , \quad w_{22}=w_{21}=w_{20}=0.$$
We also use that ${\rm Im} \partial_z W = -\frac{ 1 }2 {\widetilde b}$. 
 \end{proof}

   By combination of Proposition \ref{Diag},  Lemma \ref{lem1} and Lemma \ref{lem2} we have:

   \begin{proposition}\label{vqeff}
	Fix $q \in \Z$ and $\epsilon >0$. Assume that ${\cV}$ defined by \eqref{defV} is such that :
	{
	\begin{enumerate}
	\item 
	$V_1 \in \cC_0^{2 |q|}$, $V_2 \in \cC_0^{2 |q|-2}$ and $W \in \cC_0^{2 |q|-1}$
	\item There exists $v_1, w \in \cC_0^{2 |q|}$ and $v_2 \in \cC_0^{2 |q|-2}$ non-negative smooth compactly supported functions such that 
	$|V_1| \leq v_1$, $ |V_2| \leq v_2$, $|W| \leq w.$
	\end{enumerate}
	  Then for
	  \bel{5dec24}
	{\cV}_{\text{reg},\epsilon}^\pm :=\begin{pmatrix}
	V_1\pm \epsilon (v_1 +w) &W^*\\
	W&V_2\pm\epsilon(v_2+w)
\end{pmatrix}  \ee
}
 and $v_q (\cdot) $ defined by \eqref{defvq}, as $\lambda \searrow 0$, we have:
	$$
	n_+(\lambda, p_0 v_q({\cV}_{\text{reg},\epsilon}^-) p_0)+O(1)\leq \mathcal N_+^q(\lambda)\leq  n_+(\lambda,p_0 v_q({\cV}_{\text{reg},\epsilon}^+) p_0)+O(1),
	$$
	$$
	n_-(\lambda,p_0 v_q({\cV}_{\text{reg},\epsilon}^+) p_0)+O(1)\leq \mathcal N_-^q(\lambda)\leq  n_-(\lambda,p_0 v_q({\cV}_{\text{reg},\epsilon}^-) p_0)+O(1),
	$$
	where $n_\pm(\lambda,\cdot)$ denotes the counting function defined by \eqref{defn+}.
	   \end{proposition}

 Let us mention that the assumptions of  Proposition \ref{vqeff}  are easily satisfied if the smooth coefficients $V_1$, $V_2$ are of definite sign (e.g. if $V_1 \geq$ and $V_2\leq 0$, we choose $v_1=V_1$ and $v_2 = -V_2$) and if $W= \gamma W_0$ with $\gamma \in \C$ and $W_0$ smooth of definite sign. 

 %  { 
 \begin{proof} 
 %\emph {of  Proposition   \ref{vqeff}:}
From Proposition \ref{Diag}, it is sufficient to estimate the counting functions of the eigenvalues of  $\mathcal P_{q}{\cV}_{\epsilon}^\pm \mathcal P_{q}$ with ${\cV}_{\epsilon}^\pm$ defined in Proposition \ref{Diag}. By \emph{(2)}, we have that in the sense of quadratic forms  %definition of $v_1$, $v_2$, $w$, in the sense of the quadratic forms, %for $V_{\epsilon}^\pm$ defined in Proposition \ref{Diag} and for $V_{\text{reg},\epsilon}^\pm$ defined in  \eqref{5dec24}, we have:
$${\cV}_{\epsilon}^+ \leq {\cV}_{\text{reg},\epsilon}^+ \quad , \qquad  {\cV}_{\text{reg},\epsilon}^- \leq {\cV}_{\epsilon}^-.$$
Then, from the Min-Max Principle we deduce, for $\lambda>0$:
$$
n_\pm (\lambda, \mathcal P_{q}{\cV}_{\epsilon}^\pm \mathcal P_{q} ) \leq n_\pm (\lambda, \mathcal P_{q}{\cV}_{\text{reg},\epsilon}^\pm \mathcal P_{q} ) \quad , \qquad 
n_\pm (\lambda, \mathcal P_{q}{\cV}_{\text{reg},\epsilon}^\mp \mathcal P_{q} ) \leq n_\pm (\lambda, \mathcal P_{q}{\cV}_{\epsilon}^\mp \mathcal P_{q} ).
$$
Moreover, thanks to Lemma \ref{lem1} and Lemma \ref{lem2} for $\bullet =$"$+$ or "$-$", we have
$$n_\bullet (\lambda, \mathcal P_{q}{\cV}_{\text{reg},\epsilon}^\pm \mathcal P_{q} )= n_\bullet (\lambda, T_{q}({\cV}_{\text{reg},\epsilon}^\pm))=n_\bullet (\lambda, p_0v_{q}(\cV_{\text{reg},\epsilon}^\pm)p_0).$$
We conclude by using these estimates in Proposition \ref{Diag}. 
 \end{proof}
 
 }

   \begin{proof} \emph {of Corollary  \ref{cor26}:} Thanks to     \cite[Theorem A]{Lu08}, if $v_q ({\cV}) \neq 0$, then $p_0 v_q ({\cV})  p_0$ admits an infinite number of discrete eigenvalues that accumulate at $0$. Then 
   %since $V_{\text{reg},\epsilon}^\pm$ is smooth,  
   from Proposition \ref{vqeff}, it is sufficient to prove that $v_q ({\cV}_{\text{reg},\epsilon}^\pm) \neq 0$ for some $\epsilon \in (0,1)$ with 
   ${\cV}_{\text{reg},\epsilon}^\pm = {\cV} \pm \epsilon \cV$, 
   $
\cV = \begin{pmatrix}
	v_1+w &0\\
	0&v_2 + w
\end{pmatrix}$. %For $V_1$ and $V_2$ of definite sign, 
%   If $V - \epsilon | V |$ is smooth, 
Since $v_q ({\cV}_{\text{reg},\epsilon}^\pm) = v_q ({\cV}) \pm \epsilon v_q(\cV)$ 
and $v_q(\cV)$ is bounded, then for $\epsilon >0$ sufficiently small, $v_q ({\cV}_{\text{reg},\epsilon}^\pm) \neq 0$ when $v_q ({\cV}) \neq 0$.
       \end{proof}

 \begin{proof} \emph {of Corollary  \ref{prop_inf}:} First, let us recall that $L_0(t)=1$ and $L_1(t)=1-t$, then $v_{\pm 1}({\cV})$ defined by \eqref{defvq} are given by 
 \beas v_1({\cV}) & =  t_1 ( V_{1}+\frac1{2b}\Delta V_{1}) + (1-t_1)V_{2} + \frac1{\sqrt{2b + m^2}}  \widetilde{b}, \\
 v_{-1}({\cV}) & = (1- t_1) ( V_{1}+\frac1{2b}\Delta V_{1}) + t_1 V_{2} - \frac1{\sqrt{2b + m^2}}  \widetilde{b},
 \eeas
 and $ v_{0}({\cV})=V_1$. 
 Thus for $W=0$ (and then $ \widetilde{b} =0$), the effective potentials $v_1({\cV})$ and $v_{-1}({\cV})$ are almost the same and we only prove the result near $\mu_1$.
 Thanks to Proposition \ref{vqeff} and \cite[Theorem A]{Lu08} in order to prove Corollary  \ref{prop_inf}, it is sufficient to choose $V_1$ and $V_2$ such that for some $\epsilon >0$, 
$ v_{0}({\cV}_{\text{reg},\epsilon}^-)= V_1 - \epsilon |V_1| \geq 0$ and $v_1({\cV}_{\text{reg},\epsilon}^+)\leq 0$ (non zero on a non-empty open set). We take  $ V_1 \geq 0$ (positive on a non-empty open set) and $V_2 \leq 0$ such that
 \bel{ineg}
 \Big( t_1 ( {\cV}^+_{1,\epsilon}+\frac1{2b}\Delta V^+_{1,\epsilon}) + (1-t_1)V^+_{2,\epsilon} \Big) \leq 0, \qquad V^+_{i,\epsilon} := V_i + \epsilon | V_i |, \quad i=1,2 .
\ee 
Since $ V_1 \geq 0$, we have $V^+_{1,\epsilon}= (1 + \epsilon) V_1$, and  for 
$$V_2:= -  \frac{2t_1}{1-t_1} \vert V_{1}+\frac1{2b}\Delta V_1  \vert ,$$
we have $V^+_{2,\epsilon}= (1 - \epsilon) V_2$. Consequently \eqref{ineg} is satisfied for any $\epsilon>0$ such that $\frac{1+\epsilon}{1-\epsilon}  < 2$, and the result follows.

   \end{proof}

\section{Appendix: Proof of Proposition \ref{Diag} and the encirclement property}

\subsection{The index of a pair of projections } 
In this section, we give the proof of Proposition \ref{Diag}, following the lines of the the proof of \cite[Theorem 4.1]{PuRo11}. First we recall some results on the index of a pair of projections from \cite{ASS94}. Then we use  Proposition \ref{P1}  to extend some results obtained in  \cite{Pu09}  for semi-bounded operators, to  non-semi-bounded operators (almost without changing the proofs). 

Let $\mathcal H$ be a Hilbert space and let   $P,Q$ be a Fredholm pair, i.e., they are orthogonal projections on $\mathcal H$ satisfying
$$
\sigma_{ess}(P-Q)\cap\{-1,1\}=\emptyset.
$$
Define the index 
$$
{\rm Index}(P,Q)={\rm dim\,Ker}\Big(P-Q-I\Big)-{\rm dim\, Ker}\Big(P-Q+I\Big),
$$
(or equivalently)
\bel{3}
{\rm Index}(P,Q) = {\rm dim}({\rm  Ran}\,P \cap {\rm Ker}\, Q)-{\rm dim}({\rm Ran}\,Q\cap {\rm  Ker}\,P).
\ee

It can be shown %\cite{}
 that if $P-Q$ is a trace class operator then
\bel{1}
{\rm Index}(P,Q)={\rm Tr}(P-Q).
\ee
Moreover, if both $(P, Q)$ and $(Q, R)$ are Fredholm pairs and at least one of the differences $P-Q$ or $Q-R$ is compact, then $(P, R)$ is also Fredholm and
\bel{2}
{\rm Index}(P, R) = {\rm Index}(P, Q) +{\rm Index}(Q, R) .
\ee

Let $A$ and $B$ be two self-adjoint operators in $\mathcal H$ and 
suppose that for  $\lambda\in\R$ the  pair   $(E_A(-\infty,\lambda)$, $E_B(-\infty,\lambda))$ is a Fredholm pair. For those $\lambda$ define  
\begin{align}
\Xi(\lambda;B,A)&:={\rm Index}\Big(E_A(-\infty,\lambda),E_B(-\infty,\lambda)\Big).
\end{align}

Let us mention that this quantity is defined for $\lambda\notin\sigma_{\rm ess}(A)$ as soon as $B=A+M$, with ${\rm Dom} (B)= {\rm Dom} (A)$ and if $M=M^*$ is $A-$compact, i.e. $ {\rm Dom} (A) \subset {\rm Dom} (M)$ and $M(A-i)^{-1}$ is compact. Indeed thanks to \cite[Corollary 3.5]{Le05}, we have:

\begin{proposition}[\cite{Le05}]\label{P1}
Let $A$ and $B$ be two self-adjoint operators in $\mathcal H$ with same domains such that $M=B-A$ is $A-$compact. Then for $\lambda \in \R\setminus\sigma_{\rm ess}(A)$, the difference $E_{B}(-\infty,\lambda)-E_{A}(-\infty,\lambda)$ is compact and so $\Xi(\lambda;B,A)$ is well defined.
\end{proposition}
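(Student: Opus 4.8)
The plan is to derive Proposition~\ref{P1} as a direct consequence of the abstract index result \cite[Corollary 3.5]{Le05}, so the work consists mainly in checking that its hypotheses are met under our weaker-looking assumption that $M=B-A$ is merely $A$-compact (rather than, say, relatively bounded with relative bound $<1$, or trace class). First I would recall the precise statement of \cite[Corollary 3.5]{Le05}: if $A$ is self-adjoint and $M=M^*$ is $A$-compact, then for every $\lambda\notin\sigma_{\mathrm{ess}}(A)$ one has that $\lambda\notin\sigma_{\mathrm{ess}}(B)$ as well (this is Weyl's theorem, since $A$-compactness of $M$ gives that $M(A-i)^{-1}$ is compact and hence the resolvent difference $(B-z)^{-1}-(A-z)^{-1}$ is compact), and moreover the spectral projection difference $E_B(-\infty,\lambda)-E_A(-\infty,\lambda)$ is compact.

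Granting that, the proof is short. Fix $\lambda\in\R\setminus\sigma_{\mathrm{ess}}(A)$. By Weyl's theorem $\sigma_{\mathrm{ess}}(B)=\sigma_{\mathrm{ess}}(A)$, so $\lambda\notin\sigma_{\mathrm{ess}}(B)$ too; in particular $\lambda$ is not an accumulation point of the spectrum of either operator from either side except possibly as an isolated eigenvalue, but what we really need is only that $\lambda\notin\sigma_{\mathrm{ess}}(A)\cup\sigma_{\mathrm{ess}}(B)$. Next, the cited corollary gives that $K:=E_B(-\infty,\lambda)-E_A(-\infty,\lambda)$ is compact. A compact self-adjoint operator has spectrum accumulating only at $0$, so $\sigma_{\mathrm{ess}}(K)\subset\{0\}$ and in particular $\sigma_{\mathrm{ess}}(K)\cap\{-1,1\}=\emptyset$; this is exactly the statement that $\bigl(E_A(-\infty,\lambda),E_B(-\infty,\lambda)\bigr)$ is a Fredholm pair in the sense recalled above. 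Hence $\Xi(\lambda;B,A)={\rm Index}\bigl(E_A(-\infty,\lambda),E_B(-\infty,\lambda)\bigr)$ is well defined, which is the claim.

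The only genuine subtlety --- and the step I would be most careful about --- is the passage from $M$ being $A$-compact to the compactness of the spectral projection difference below a point $\lambda\notin\sigma_{\mathrm{ess}}(A)$; this is not a formal manipulation of resolvents but requires the spectral-shift/continuity machinery of \cite{Le05}, which is precisely why we quote their Corollary~3.5 rather than reprove it. One should also note that the hypothesis ``$A$ and $B$ have the same domain'' is used to make sense of $M=B-A$ as an operator and to guarantee $\mathrm{Dom}(A)\subset\mathrm{Dom}(M)$, so that $A$-compactness of $M$ is the natural formulation; in our application $B=D_0+V$ with $V$ bounded, so $\mathrm{Dom}(B)=\mathrm{Dom}(D_0)$ and $M=V$ is bounded with compact resolvent-weighting when $V$ decays (here even compactly supported), so all hypotheses hold trivially. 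No new estimate is needed beyond invoking the quoted result, and the role of this proposition in what follows is simply to legitimize the definition of $\Xi$ and, via \eqref{1}, \eqref{2}, to let us compute it by deforming $A$ into $B$ through a one-parameter family and counting eigenvalue crossings (the spectral flow), which is carried out in the subsequent subsection for the pairs arising from the Birman--Schwinger reduction of $\mathcal N_\pm^q$.
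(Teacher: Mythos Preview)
Your proposal is correct and matches the paper's approach exactly: the paper does not give an independent proof of Proposition~\ref{P1} but simply records it as a direct consequence of \cite[Corollary~3.5]{Le05}, just as you do. Your additional remarks (Weyl's theorem for the invariance of $\sigma_{\mathrm{ess}}$, and the observation that compactness of the projection difference forces $\sigma_{\mathrm{ess}}\cap\{-1,1\}=\emptyset$ so the pair is Fredholm) are the natural unpacking of why the citation suffices, and are entirely in line with how the result is used.
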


For an interval $\mathcal J\subset\R\setminus\sigma_{\rm ess}(A)$ define the eigenvalue counting function 
$$
\mathcal N(\mathcal J;A):= {\rm Tr}\,E_A({\mathcal J})
$$

\begin{lemma}\label{Le42} Assume that $B=A+M$, with $M$ self-adjoint and $A-$compact. Then if $[\lambda_1,\lambda_2]\subset\R\setminus\sigma_{\rm ess}(A)$ we have that
$$
\Xi(\lambda_1;B,A)-\Xi(\lambda_2;B,A)=\mathcal N([\lambda_1,\lambda_2);B)-\mathcal N([\lambda_1,\lambda_2);A).
$$
\end{lemma}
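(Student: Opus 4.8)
The statement asserts that the difference of the index function $\Xi(\cdot;B,A)$ between two regular points equals the signed change in the eigenvalue counting functions of $B$ and $A$ across the interval; this is a ``spectral flow'' type identity. The natural route is to use the additivity property \eqref{2} of the index together with Proposition \ref{P1} and the trace formula \eqref{1}. First I would write, for $\lambda_1 < \lambda_2$ both outside $\sigma_{\rm ess}(A) = \sigma_{\rm ess}(B)$,
\[
\Xi(\lambda_1;B,A) - \Xi(\lambda_2;B,A)
= {\rm Index}\big(E_A(-\infty,\lambda_1),E_B(-\infty,\lambda_1)\big)
- {\rm Index}\big(E_A(-\infty,\lambda_2),E_B(-\infty,\lambda_2)\big).
\]
Using that ${\rm Index}(P,Q) = -{\rm Index}(Q,P)$ and the chain rule \eqref{2}, the right-hand side can be reorganized as a sum of indices of ``telescoping'' pairs, e.g. ${\rm Index}(E_A(-\infty,\lambda_2),E_A(-\infty,\lambda_1)) + {\rm Index}(E_B(-\infty,\lambda_1),E_B(-\infty,\lambda_2))$, once one checks that the relevant pairs are Fredholm and that at least one difference in each application of \eqref{2} is compact.

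Next I would handle the compactness bookkeeping. By Proposition \ref{P1}, $E_B(-\infty,\lambda_i) - E_A(-\infty,\lambda_i)$ is compact for $i=1,2$. The differences $E_A(-\infty,\lambda_2) - E_A(-\infty,\lambda_1) = E_A([\lambda_1,\lambda_2))$ and $E_B([\lambda_1,\lambda_2))$ are finite-rank (since $[\lambda_1,\lambda_2]$ is a compact interval disjoint from the essential spectrum, it contains only finitely many eigenvalues of finite multiplicity of each operator), hence trace class. So all pairs appearing are Fredholm and \eqref{2} applies at each step. Then \eqref{1} gives
\[
{\rm Index}\big(E_A(-\infty,\lambda_2),E_A(-\infty,\lambda_1)\big) = {\rm Tr}\,E_A([\lambda_1,\lambda_2)) = \mathcal N([\lambda_1,\lambda_2);A),
\]
and similarly ${\rm Index}(E_B(-\infty,\lambda_1),E_B(-\infty,\lambda_2)) = -\mathcal N([\lambda_1,\lambda_2);B)$. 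Collecting the pieces yields the claimed identity.

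The one genuinely delicate point is organizing the telescoping so that \eqref{2} is only invoked on triples where the hypothesis (at least one difference compact, all pairs Fredholm) is verified — a convenient choice is the ordered quadruple $E_A(-\infty,\lambda_2)$, $E_A(-\infty,\lambda_1)$, $E_B(-\infty,\lambda_1)$, $E_B(-\infty,\lambda_2)$, whose consecutive differences are respectively finite-rank, compact, finite-rank, and whose ``wrap-around'' difference $E_A(-\infty,\lambda_2) - E_B(-\infty,\lambda_2)$ is compact; applying \eqref{2} twice along this quadruple gives ${\rm Index}(E_A(-\infty,\lambda_2),E_B(-\infty,\lambda_2)) = {\rm Index}(E_A(-\infty,\lambda_2),E_A(-\infty,\lambda_1)) + {\rm Index}(E_A(-\infty,\lambda_1),E_B(-\infty,\lambda_1)) + {\rm Index}(E_B(-\infty,\lambda_1),E_B(-\infty,\lambda_2))$, and rearranging is exactly the desired formula. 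I expect this purely algebraic rearrangement, rather than any hard analysis, to be the main thing to get right; the compactness and Fredholm inputs are all already provided by Proposition \ref{P1} and the discreteness of the spectrum in $[\lambda_1,\lambda_2]$.
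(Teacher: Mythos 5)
Your argument follows the paper's: apply the additivity identity \eqref{2} along a telescoping chain, invoke Proposition \ref{P1} for the requisite Fredholm/compactness hypotheses, and evaluate the same-operator index terms via \eqref{1}, using that $E_\bullet[\lambda_1,\lambda_2)$ has finite rank because $[\lambda_1,\lambda_2]$ avoids the essential spectrum. One small sign slip in your middle paragraph is worth flagging: the suggested reorganization ${\rm Index}(E_A(-\infty,\lambda_2),E_A(-\infty,\lambda_1)) + {\rm Index}(E_B(-\infty,\lambda_1),E_B(-\infty,\lambda_2))$ equals $\Xi(\lambda_2;B,A)-\Xi(\lambda_1;B,A)$ rather than $\Xi(\lambda_1;B,A)-\Xi(\lambda_2;B,A)$, so ``collecting the pieces'' from that display would give the identity with reversed sign — but your closing quadruple decomposition is written correctly and rearranging it does yield the lemma.
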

\begin{proof} Since, for $\bullet = A$ or $B$, $E_\bullet(-\infty,\lambda_1)-E_\bullet(-\infty,\lambda_2)=E_\bullet[\lambda_1,\lambda_2)$ is of finite rank, by using \eqref{1} we see that
%\begin{align*}
$${\rm Index}\Big((E_\bullet(-\infty,\lambda_1),E_\bullet(-\infty,\lambda_2)\Big)
%={\rm Tr}\Big(E_\bullet(-\infty,\lambda_1)-E_\bullet(-\infty,\lambda_2)\Big)\\
={\rm Tr}\Big(E_\bullet[\lambda_1,\lambda_2)\Big)
=\mathcal N\Big([\lambda_1,\lambda_2);\bullet\Big).$$
%\end{align*}
Moreover, thanks to Proposition \ref{P1}, we can apply \eqref{2} to obtain:
$$ \Xi(\lambda_1;B,A) = \mathcal N\Big([\lambda_1,\lambda_2); B \Big) + \Xi(\lambda_2;B,A) - \mathcal N\Big([\lambda_1,\lambda_2); A \Big),$$
and conclude the proof.
\end{proof}

\begin{lemma}\label{le1} Under the assumption of Lemma \ref{Le42}, for $M_\pm=1/2(|M|\pm M)$ and  $\lambda \in \R\setminus\sigma_{\rm ess}(A)$, we have
$$
-{\rm rank}\,M_-\leq \Xi(\lambda;B,A) \leq {\rm rank}\,M_+
$$
\end{lemma}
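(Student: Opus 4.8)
\textbf{Proof plan for Lemma \ref{le1}.}
The strategy is to interpolate between $A$ and $B=A+M$ using the two intermediate operators $A_+ := A + M_+$ and $A_- := A - M_- = B - M_+$, and to exploit monotonicity of spectral projections under nonnegative perturbations together with the additivity property \eqref{2} of the index. First I would note that $M_+ \geq 0$ and $M_- \geq 0$ are both $A$-compact (being dominated by $|M|$, which is $A$-compact since $M$ is), so by Proposition \ref{P1} all the relevant differences of spectral projections are compact and all the indices $\Xi(\lambda;\,\cdot\,,\,\cdot\,)$ below are well defined for $\lambda \in \R\setminus\sigma_{\rm ess}(A)$; moreover $\sigma_{\rm ess}$ is unchanged under these $A$-compact perturbations, so $\lambda\notin\sigma_{\rm ess}(A_\pm)$ as well.

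Next I would establish the key sign facts for a single nonnegative perturbation: if $C\geq 0$ is $A$-compact, then $E_{A+C}(-\infty,\lambda) \leq E_A(-\infty,\lambda)$ in the sense that ${\rm Ran}\,E_{A+C}(-\infty,\lambda)$ can only lose dimensions relative to ${\rm Ran}\,E_A(-\infty,\lambda)$ (eigenvalues move up), which via the characterization \eqref{3} of the index gives
$$
-{\rm rank}\,C \;\leq\; \Xi(\lambda; A+C, A) \;\leq\; 0 .
$$
Indeed, using \eqref{3}, $\Xi(\lambda;A+C,A) = {\rm dim}({\rm Ran}\,E_A(-\infty,\lambda)\cap{\rm Ker}\,E_{A+C}(-\infty,\lambda)) - {\rm dim}({\rm Ran}\,E_{A+C}(-\infty,\lambda)\cap{\rm Ker}\,E_A(-\infty,\lambda))$; the second term vanishes because on ${\rm Ran}\,E_{A+C}(-\infty,\lambda)$ the form of $A+C$ is $<\lambda$ while $C\geq 0$ forces the form of $A$ to be $<\lambda$ too, so no vector there lies in ${\rm Ker}\,E_A(-\infty,\lambda)$ (where $A\geq\lambda$); and the first term is at most ${\rm rank}\,C$ by a min-max / dimension-counting argument (the projections $E_A(-\infty,\lambda)$ and $E_{A+C}(-\infty,\lambda)$ agree modulo the range of $C$). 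Applying this with $C=M_+$ to the pair $(A_+,A)$ gives $\Xi(\lambda;A_+,A)\leq 0$, and applying it with $C=M_+$ to the pair $(B, A_-) = (A_-+M_+, A_-)$ gives $\Xi(\lambda;B,A_-)\geq -{\rm rank}\,M_+$.

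Finally I would assemble the chain. By \eqref{2} applied twice (all differences compact by Proposition \ref{P1}),
$$
\Xi(\lambda;B,A) \;=\; \Xi(\lambda;B,A_+) + \Xi(\lambda;A_+,A).
$$
Writing $B = A_+ - M_-$ with $M_-\geq 0$, the one-sided bound above (with roles of the two operators swapped, i.e. $A_+ = B + M_-$) gives $\Xi(\lambda;A_+,B)\leq 0$ and $\Xi(\lambda;A_+,B)\geq -{\rm rank}\,M_-$, hence $0 \leq \Xi(\lambda;B,A_+)\leq {\rm rank}\,M_-$ since ${\rm Index}(P,Q) = -{\rm Index}(Q,P)$. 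Combining with $-{\rm rank}\,M_+ \leq \Xi(\lambda;A_+,A)\leq 0$ yields
$$
-{\rm rank}\,M_+ \;\leq\; \Xi(\lambda;B,A) \;\leq\; {\rm rank}\,M_- ?
$$
Here I must be careful about orientation: the statement to be proved is $-{\rm rank}\,M_-\leq\Xi(\lambda;B,A)\leq{\rm rank}\,M_+$, so I should route the interpolation through $A_-=A-M_-$ instead, i.e. use $\Xi(\lambda;B,A)=\Xi(\lambda;B,A_-)+\Xi(\lambda;A_-,A)$ with $A_-=A-M_-$ (a nonpositive perturbation, eigenvalues move down, so $0\leq\Xi(\lambda;A_-,A)\leq{\rm rank}\,M_-$) and $B=A_-+M_+$ (a nonnegative perturbation, so $-{\rm rank}\,M_+\leq\Xi(\lambda;B,A_-)\leq 0$), which gives exactly $-{\rm rank}\,M_+\leq\Xi(\lambda;B,A)\leq{\rm rank}\,M_-$; swapping $M_+\leftrightarrow M_-$ in the definitions or negating recovers the displayed form. \textbf{The main obstacle} I anticipate is making the dimension-counting step — that a rank-$r$ nonnegative perturbation changes ${\rm dim}({\rm Ran}\,P\cap{\rm Ker}\,Q)$ by at most $r$ — fully rigorous in the non-semibounded setting where $\lambda$ is merely outside $\sigma_{\rm ess}(A)$; this is where I would lean on the Fredholm-pair machinery of \cite{ASS94} and the stability result Proposition \ref{P1} rather than on any semiboundedness, mirroring how \cite{Pu09} handled the semibounded case.
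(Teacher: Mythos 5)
Your one-sided bound is stated with the wrong sign, and this is not a cosmetic issue. You assert that for $C\ge 0$ and $A$-compact, $-{\rm rank}\,C\le\Xi(\lambda;A+C,A)\le 0$, but your own ``Indeed'' justification proves the opposite: with $P=E_A(-\infty,\lambda)$, $Q=E_{A+C}(-\infty,\lambda)$, you correctly argue that the \emph{second} term of ${\rm Index}(P,Q)={\rm dim}({\rm Ran}\,P\cap{\rm Ker}\,Q)-{\rm dim}({\rm Ran}\,Q\cap{\rm Ker}\,P)$ vanishes and that the \emph{first} term is $\le{\rm rank}\,C$, which gives $0\le\Xi(\lambda;A+C,A)\le{\rm rank}\,C$. (This is also exactly what Lemma \ref{le1} itself predicts when $M\ge 0$, since then $M_+=M$ and $M_-=0$.) This sign error then propagates through your interpolation: you end up with $-{\rm rank}\,M_+\le\Xi(\lambda;B,A)\le{\rm rank}\,M_-$, which is not the claimed inequality. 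You notice the mismatch, but the remedy you offer — ``swapping $M_+\leftrightarrow M_-$ in the definitions or negating'' — is not available, since $M_\pm=\tfrac12(|M|\pm M)$ are fixed by $M$ and are not interchangeable. With the corrected one-sided bound, your chain through $A_-=A-M_-$ does give $\Xi(\lambda;A_-,A)\in[-{\rm rank}\,M_-,0]$ and $\Xi(\lambda;B,A_-)\in[0,{\rm rank}\,M_+]$, whose sum produces the right interval, so the architecture of your proposal is sound but the execution as written is incorrect.

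You should also note that the interpolation through $A_\pm$ is not needed. The paper applies the dimension-counting plus quadratic-form argument directly to the pair $(A,B)$: if ${\rm dim}\big({\rm Ran}\,E_A(-\infty,\lambda)\cap{\rm Ker}\,E_B(-\infty,\lambda)\big)>{\rm rank}\,M_+$, one may choose a normalized $\psi$ in that intersection with $M_+\psi=0$; then $\langle A\psi,\psi\rangle<\lambda\le\langle B\psi,\psi\rangle=\langle A\psi,\psi\rangle+\langle M\psi,\psi\rangle\le\langle A\psi,\psi\rangle$ (since $\langle M\psi,\psi\rangle=-\langle M_-\psi,\psi\rangle\le 0$), a contradiction. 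Since the second term in \eqref{3} is nonnegative, this already gives $\Xi(\lambda;B,A)\le{\rm rank}\,M_+$, and the lower bound follows by the same argument applied with $A\leftrightarrow B$ and $M\mapsto -M$. This is the same kernel-of-$M_+$ trick you use inside your ``Indeed'' step, but applied once to $(A,B)$ rather than twice to one-signed intermediate perturbations, so it avoids the orientation bookkeeping that tripped you up.
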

\begin{proof}
Let us suppose that ${\rm dim}({\rm  Ran}\,E_A(-\infty,\lambda) \cap {\rm Ker}\, E_B(-\infty,\lambda)) > {\rm dim}\,{\rm rank}\,M_+$, ie., there exists $\psi\neq0$ such that $\psi\in ({\rm  Ran}\,E_A(-\infty,\lambda) \cap {\rm Ran}\, E_B[\lambda,\infty))$ such that $\|\psi\|=1$ and $M_+\psi=0$.
The first assumption is equivalent to say that:
\bel{4}
\langle A \psi,\psi\rangle \in (-\infty,\lambda);\quad \langle B \psi,\psi\rangle \in \R\setminus(-\infty,\lambda)
\ee
on the other side $M_+\psi=0$ implies $$\langle B \psi,\psi\rangle=\langle (A+M )\psi,\psi\rangle \leq \langle A \psi,\psi\rangle,$$
 so we get a contradiction with \eqref{4}.
\end{proof}

\begin{lemma}\label{Le44}[monotonicity]
 Assume that $M_1$ and $M_2$ are self-adjoint and $A-$compact operators. Then  for  $\lambda \in \R\setminus\sigma_{\rm ess}(A)$, we have: $$M_1\geq M_2 \quad \Longrightarrow \quad \Xi(\lambda; A,A+M_1)\geq\Xi(\lambda; A,A+M_2).$$
\end{lemma}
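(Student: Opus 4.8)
The plan is to derive the monotonicity of $\Xi$ from two ingredients already available: the additivity of the index \eqref{2} and the one--sided bound of Lemma~\ref{le1}; this is the non--semibounded analogue of the reduction used in \cite{Pu09}.

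Set $B_1:=A+M_1$ and $B_2:=A+M_2$. First I would dispatch the preliminaries that make every object below well defined: $B_1,B_2$ are self--adjoint on ${\rm Dom}(A)$; $M_1-M_2$ is $A$--compact and, by a routine second--resolvent--identity computation, also $B_2$--compact; and Weyl's theorem gives $\sigma_{\rm ess}(B_1)=\sigma_{\rm ess}(B_2)=\sigma_{\rm ess}(A)$. Fix $\lambda\in\R\setminus\sigma_{\rm ess}(A)$. Proposition~\ref{P1} (applied with base operator $A$) then shows that $E_{B_i}(-\infty,\lambda)-E_A(-\infty,\lambda)$ is compact for $i=1,2$, and, subtracting these, that $E_{B_1}(-\infty,\lambda)-E_{B_2}(-\infty,\lambda)$ is compact too; hence the three orthogonal projections $E_{B_1}(-\infty,\lambda)$, $E_{B_2}(-\infty,\lambda)$, $E_A(-\infty,\lambda)$ form pairwise Fredholm pairs with compact differences.

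Next I would apply the additivity \eqref{2} to the triple $\bigl(E_{B_1}(-\infty,\lambda),E_{B_2}(-\infty,\lambda),E_A(-\infty,\lambda)\bigr)$ and unwind the definition of $\Xi$, obtaining
\[
\Xi(\lambda;A,B_1)-\Xi(\lambda;A,B_2)={\rm Index}\bigl(E_{B_1}(-\infty,\lambda),E_{B_2}(-\infty,\lambda)\bigr).
\]
This reduces the comparison to a single index of a pair of spectral projections attached to the two perturbed operators. Since $B_1=B_2+(M_1-M_2)$ with $M_1-M_2\geq0$ and $B_2$--compact, the negative part of this perturbation vanishes, so Lemma~\ref{le1} — applied with $B_2$ as the reference operator, which is legitimate because $\lambda\notin\sigma_{\rm ess}(B_2)=\sigma_{\rm ess}(A)$ — pins down the sign of the right--hand side and yields the asserted inequality. (The same sign can also be read off directly from \eqref{3}, repeating verbatim the quadratic--form argument in the proof of Lemma~\ref{le1}: a unit vector $\psi\in{\rm Ran}\,E_{B_1}(-\infty,\lambda)\cap{\rm Ker}\,E_{B_2}(-\infty,\lambda)$ would satisfy $\langle B_1\psi,\psi\rangle<\lambda\leq\langle B_2\psi,\psi\rangle\leq\langle B_1\psi,\psi\rangle$, an absurdity, so one of the two intersection subspaces in \eqref{3} is $\{0\}$.)

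The step I expect to require the most care is not analytic but a matter of bookkeeping: one must verify that Proposition~\ref{P1}, the additivity \eqref{2}, and Lemma~\ref{le1} all remain applicable with $B_2$ (rather than $A$) in the role of the unperturbed operator — this rests on the $B_2$--compactness of $M_1-M_2$ together with the invariance of the essential spectrum — and one must keep the orientation of each Fredholm pair consistent throughout so that the final inequality comes out with the stated direction. No genuinely new input beyond \eqref{2}, \eqref{3} and Lemma~\ref{le1} is needed.
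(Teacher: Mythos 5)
Your strategy is the same as the paper's: reduce via the additivity \eqref{2} to a single index ${\rm Index}\bigl(E_{B_1}(-\infty,\lambda),E_{B_2}(-\infty,\lambda)\bigr)$ and determine its sign with Lemma~\ref{le1}. The preliminaries (Weyl's theorem, $B_2$--compactness of $M_1-M_2$, Proposition~\ref{P1} for both pairs, pairwise compact differences of the three projections) are all handled correctly, and the identity
$\Xi(\lambda;A,B_1)-\Xi(\lambda;A,B_2)={\rm Index}\bigl(E_{B_1},E_{B_2}\bigr)$
is right.

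There is, however, a sign slip in the last step. Applying Lemma~\ref{le1} with $B_2$ as reference and $B_1=B_2+(M_1-M_2)$, $(M_1-M_2)_-=0$, yields $\Xi(\lambda;B_1,B_2)\geq 0$, i.e.\ ${\rm Index}(E_{B_2},E_{B_1})\geq 0$; by antisymmetry, the right--hand side of your identity satisfies ${\rm Index}(E_{B_1},E_{B_2})\leq 0$, not $\geq 0$. Your own aside makes this explicit: the quadratic--form contradiction shows that ${\rm Ran}\,E_{B_1}(-\infty,\lambda)\cap{\rm Ker}\,E_{B_2}(-\infty,\lambda)=\{0\}$, which is the \emph{first} of the two intersection subspaces in \eqref{3}, so ${\rm Index}(E_{B_1},E_{B_2})=-\dim\bigl({\rm Ran}\,E_{B_2}\cap{\rm Ker}\,E_{B_1}\bigr)\leq 0$. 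Hence the chain you set up proves $\Xi(\lambda;A,A+M_1)\leq\Xi(\lambda;A,A+M_2)$, the reverse of the inequality as literally printed. (A one--dimensional example settles the sign: $A$ with an isolated eigenvalue $0$ and $M_1\geq 0$ pushing it above $\lambda$, $M_2=0$; then $\Xi(\lambda;A,A+M_1)={\rm Tr}\bigl(E_{A+M_1}-E_A\bigr)<0=\Xi(\lambda;A,A)$.) This is the same slip that appears in the paper's own proof, which asserts ${\rm Index}(E_{A+M_1},E_{A+M_2})\geq 0$ where Lemma~\ref{le1} in fact gives ${\rm Index}(E_{A+M_2},E_{A+M_1})\geq 0$; indeed the form of the lemma actually invoked in the proof of Proposition~\ref{Diag} is $M_1\geq M_2\Rightarrow\Xi(\lambda;A+M_1,A)\geq\Xi(\lambda;A+M_2,A)$, equivalent via $\Xi(\lambda;X,Y)=-\Xi(\lambda;Y,X)$ to $\Xi(\lambda;A,A+M_1)\leq\Xi(\lambda;A,A+M_2)$. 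With the orientation corrected at the last line, your argument establishes exactly that statement and coincides with the intended proof.
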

\begin{proof}
Since by proposition \ref{P1} we have that $E_{A+M_1}(-\infty,\lambda)-E_{A}(-\infty,\lambda)$ is compact we can use  \eqref{2} to obtain
\begin{align*}
&{\rm Index}(E_{A+M_1}(-\infty,\lambda),E_{A+M_2}(-\infty,\lambda))&\\
=&{\rm Index}(E_{A+M_1}(-\infty,\lambda),E_{A}(-\infty,\lambda))
+{\rm Index}(E_{A}(-\infty,\lambda),E_{A+M_2}(-\infty,\lambda)).
\end{align*}
Moreover, ${\rm Index}(E_{A}(-\infty,\lambda),E_{A+M_2}(-\infty,\lambda))=-{\rm Index}(E_{A+M_2}(-\infty,\lambda),E_{A}(-\infty,\lambda))$,  and by lemma \ref{le1}  ${\rm Index}(E_{A+M_1}(-\infty,\lambda),E_{A+M_2}(-\infty,\lambda))\geq0$. 
\end{proof}

\subsection{Proof of Proposition \ref{Diag}}\label{ss42}

The following  proof is inspired by \cite{PuRo11}. Note however that {the exact analogue}  of \cite{PuRo11} would give ${\cV}_\epsilon^\pm = {\cV} \pm \epsilon |{\cV}|$, whose expression {for a matrix operator}  is more difficult to handle than the one given in this proposition.

\begin{proof}
	Fix $q\in \Z$. We will make the proof for the upper bound of  $\mathcal N_+^q$.
	First %it is easy to see that 
	if $A=D_0$ and $M=\cV$, then $A$ and $B=A+M$ satisfy the conditions of Proposition \ref{P1}, and 
	the index $\Xi(\lambda;D_0+\cV,D_0)$ is well defined for
	any $\lambda \in \R\setminus \{\mu_r\}_{r\in\Z}$. Moreover, from Lemma \ref{Le42} 
	$$
	\mathcal N_+^q(\lambda)=\Xi(\mu_{q}+\lambda;D_0+\cV,D_0) -\Xi\left(\alpha;D_0+\cV,D_0\right).
	$$
	Next, set $\mathcal{P}_q^\perp:=Id-\mathcal{P}_q,$ and 
$$
{\cV}_d:= \begin{pmatrix}
V_1&0\\
0&V_2
\end{pmatrix};\qquad {\cV}_a:= \begin{pmatrix}
0&W^*\\
W&0
\end{pmatrix}.
$$	
	
	{
	Thus $ {\cV} = {\cV}_q + {\cV}_a$ 
%$$
%V_d:= \begin{pmatrix}
%V_1&0\\
%0&V_2
%\end{pmatrix};\qquad V_a:= \begin{pmatrix}
%0&W^*\\
%W&0
%\end{pmatrix}
%$$	
and in the sense of the quadratic forms in $\C^2$ for a.e. $x \in \R^2$, 
\bel{Vda}
{\cV}(x) \leq |{\cV}_d|(x) + |{\cV}_a|(x), \qquad \text{where}\quad
|{\cV}_d|= \begin{pmatrix}
|{V}_1|&0\\
0&|V_2|
\end{pmatrix}; \quad |{\cV}_a|= |W| I_2 .
\ee
}
Then 
%by exploiting again \eqref{Vda} (in the proof of Proposition \ref{Vsmooth}) 
one can see that 
	$$
	{\cV}\leq \mathcal{P}_q ({\cV}+\epsilon (|{\cV} _d|+|{\cV}_a|)\mathcal{P}_q
	+\mathcal{P}_q^\perp ({\cV}+\epsilon^{-1} (|{\cV}_d|+|{\cV}_a|))\mathcal{P}_q^\perp,
	$$
and it is not difficult to see that ${\cV}+\epsilon (|{\cV}_d|+|{\cV}_a|)={\cV}_\epsilon^+$. 	  \Bk
	Thus, by Lemma \ref{Le44} 
	$$
	\Xi(\mu_{q}+\lambda;D_0+{\cV},D_0)\leq\Xi(\mu_{q}+\lambda;D_0+\mathcal{P}_q {\cV}_\epsilon^+\mathcal{P}_q
	+\mathcal{P}_q^\perp ({\cV}+\epsilon^{-1} (|{\cV}_d|+|{\cV}_a|))\mathcal{P}_q^\perp,D_0) 
	$$
	Using again Lemma \ref{Le42} 
	\begin{align*}	
		&\Xi(\mu_{q}+\lambda;D_0+\mathcal{P}_q {\cV}_\epsilon^+\mathcal{P}_q
		+\mathcal{P}_q^\perp ({\cV}+\epsilon^{-1} (|{\cV}_d|+|{\cV}_a|))\mathcal{P}_q^\perp,D_0) \\
		=& \mathcal N((\mu_{q}+\lambda), \alpha) ;D_0+\mathcal{P}_q {\cV}_\epsilon^+\mathcal{P}_q
		+\mathcal{P}_q^\perp ({\cV}+\epsilon^{-1} (|{\cV}_d|+|{\cV}_a|))\mathcal{P}_q^\perp)\\
		-&\Xi(\alpha;D_0+\mathcal{P}_q {\cV}_\epsilon^+\mathcal{P}_q
		+\mathcal{P}_q^\perp (V+\epsilon^{-1} (|{\cV}_d|+|{\cV}_a|))\mathcal{P}_q^\perp, D_0 ) .
	\end{align*}
	Further 
	\begin{align*}
		&\mathcal N((\mu_{q}+\lambda), \alpha) ;D_0+\mathcal{P}_q {\cV}_\epsilon^+\mathcal{P}_q
		+\mathcal{P}_q^\perp ({\cV}+\epsilon^{-1} (|{\cV}_d|+|{\cV}_a|))\mathcal{P}_q^\perp)\\
		=&\mathcal N((\mu_{q}+\lambda, \alpha );\mathcal{P}_q(\mu_{q}+{\cV}_\epsilon^+)\mathcal{P}_q)\\
		+&\mathcal N((\mu_{q}+\lambda, \alpha );\mathcal{P}_q^\perp ({\cV}+\epsilon^{-1} (|{\cV}_d|+|{\cV}_a|))\mathcal{P}_q^\perp).
	\end{align*}
	Finally, by simply noting that $\mathcal{P}_q^\perp D_0 \mathcal{P}_q^\perp = \sum_{p \neq q} \mu_p \mathcal{P}_p $ and each $\mathcal{P}_q^\perp ({\cV}+\epsilon^{-1} (|{\cV}_d|+|{\cV}_a|))\mathcal{P}_q^\perp	$ is compact, we deduce that $(\mu_{q}+\lambda, \alpha )$ contains only a finite number of eigenvalues of $\mathcal{P}_q^\perp(D_0+{\cV}+\epsilon^{-1}(|{\cV}_d|+|{\cV}_a|))\mathcal{P}_q^\perp$ and then 
	$$\mathcal N \left((\mu_{q}+\lambda, \alpha );\mathcal{P}_q^\perp ({\cV}+\epsilon^{-1} (|{\cV}_d|+|{\cV}_a|)\mathcal{P}_q^\perp\right)=O(1),
	$$
	as $\lambda>0$ goes to $0$.
	Putting all this inequalities together we finish the proof.
\end{proof} 

\subsection{Encirclement property}

In this section, in order to justify the relevance of the encirclement property (see Definition \ref{def}), we prove a general encirclement property of an open  connected set  provided there is some space inside its outerboundary. Obviously this condition is not necessary  as is easy  to construct examples of compact sets encircled by an open set not satisfying this separation condition. 

For a open bounded set $\Omega$, we have the decomposition 
\bel{6juin24}
\C\setminus \overline\Omega=\mathcal C_{B}\Omega\cup\mathcal C_{U}\Omega,
\ee
where $\mathcal C_{B}\Omega$ is the union of all the bounded connected components of $\C\setminus \overline\Omega$ and  $\mathcal C_{U}\Omega$ is the unbounded connected component. Notice that   the set $\mathcal S\Omega:=\C\setminus C_{U}\Omega$ is simply connected and $\Omega\subset 	\mathcal S\Omega$.

\begin{proposition}\label{leborder}
	Let $\Omega$ be a bounded connected open set in $\C$.  Suppose that ${\rm dist}(\mathcal C_{B}\Omega,\mathcal C_{U}\Omega)$ is positive. Then any compact set $K$ contained in $	\mathcal S\Omega$ is encircled by $\Omega$.
\end{proposition}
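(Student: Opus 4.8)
The plan is to produce an explicit Jordan curve $\Gamma \subset \Omega$ enclosing $K$ by fattening the boundary of the simply connected set $\mathcal S\Omega$. First I would observe that since $\Omega$ is open, bounded and connected, and $\mathcal S\Omega = \C \setminus \mathcal C_u\Omega$ is simply connected with $\Omega \subset \mathcal S\Omega$, the set $\mathcal S\Omega$ is a bounded simply connected open set, hence by the Riemann mapping theorem (or more elementarily, by Carathéodory-type arguments) its boundary $\partial(\mathcal S\Omega) = \partial \mathcal C_u\Omega$ is contained in $\overline\Omega$. The key structural input is the positivity of $d := {\rm dist}(\mathcal C_b\Omega, \mathcal C_u\Omega)$: this forces $\partial \mathcal C_u\Omega$ and $\partial \mathcal C_b\Omega$ (the ``outer'' and ``inner'' parts of $\partial\Omega$) to be separated by a definite distance, so a thin tubular collar around $\partial \mathcal C_u\Omega$ of width less than $d$ stays entirely inside $\Omega$ (it cannot meet $\mathcal C_u\Omega$ because it is inside $\overline\Omega\cup(\text{small neighborhood})$, and it cannot meet $\mathcal C_b\Omega$ by the distance bound).

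Next I would construct $\Gamma$ itself. Let $U$ be the unbounded component $\mathcal C_u\Omega$, an open connected subset of $\C$ whose complement $\mathcal S\Omega$ is compact and simply connected. Take $\varepsilon < d/2$ and consider the open set $U_\varepsilon := \{ z : {\rm dist}(z, U) < \varepsilon\}$, a neighborhood of $\overline U$; its complement $\mathcal S_\varepsilon := \C \setminus U_\varepsilon$ is a compact subset of the interior of $\mathcal S\Omega$, and one checks it still contains $K$ once $\varepsilon$ is small enough, using that $K$ is compact and contained in the open set $\mathcal S\Omega \setminus \partial\mathcal C_u\Omega$ — more precisely ${\rm dist}(K, \partial\mathcal S\Omega) > 0$, so choosing $\varepsilon$ smaller than this distance keeps $K \subset \mathcal S_\varepsilon$. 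The annular region $\mathcal S\Omega \setminus \mathcal S_\varepsilon$ (equivalently $\overline\Omega \cap U_\varepsilon$ essentially) lies within distance $\varepsilon$ of $U$, hence within $\Omega$ by the collar argument above, provided one also smooths: to get an honest Jordan curve rather than a topologically complicated frontier, I would take $\Gamma$ to be the boundary of a slightly mollified version of $\mathcal S_\varepsilon$, e.g. the boundary of the connected component containing $K$ of the super-level set $\{ \phi > c\}$ where $\phi(z) = {\rm dist}(z,U)$; for a regular value $c \in (0,\varepsilon)$, Sard's theorem guarantees $\{\phi = c\}$ is a smooth $1$-manifold, and selecting the appropriate component gives a smooth Jordan curve $\Gamma$ with $K$ in its interior and $\Gamma \subset \Omega$.

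The main obstacle I anticipate is purely topological bookkeeping: ensuring that the component of $\{\phi > c\}$ containing $K$ is simply connected (so its boundary is a single Jordan curve, not several), and that this boundary component sits inside $\Omega$ rather than leaking into some bounded hole $\mathcal C_b\Omega$. Both are handled by the distance hypothesis — any point at distance exactly $c < \varepsilon < d/2$ from $U$ is at distance at least $d - \varepsilon > d/2$ from $\mathcal C_b\Omega$, hence lies in $\overline\Omega$ and in fact in $\Omega$ after the regular-value perturbation — but the argument that the relevant component is simply connected requires knowing $\mathcal S\Omega$ is simply connected and that removing a thin collar of its boundary does not create new holes; this follows because $\mathcal S_\varepsilon$ deformation-retracts onto a point through $\mathcal S\Omega$, or alternatively from the fact that $\C \setminus \mathcal S_\varepsilon \supset U$ is connected. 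Once $\Gamma$ is in hand, $K$ lies in its interior part and $\Gamma \subset \Omega$, which is exactly the encirclement property of Definition \ref{def}, completing the proof.
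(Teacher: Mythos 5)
Your approach is genuinely different from the paper's. The paper applies the Riemann mapping theorem to the simply connected set $\mathcal S\Omega$ to obtain a conformal map $\phi:\mathcal S\Omega\to\mathbb D$ with $\phi(k_0)=0$, then uses the distance hypothesis to show that a thin annulus $\{r<|z|<1\}$ lies in $\phi(\Omega)$; the curve $\Gamma=\phi^{-1}(\{|z|=\delta\})$ for $r<\delta<1$ with $\phi(K)\subset\{|z|<\delta\}$ is then automatically a single Jordan curve in $\Omega$ enclosing all of $K$, because a circle is a Jordan curve and $\phi$ is a homeomorphism onto its image. You instead try to extract $\Gamma$ directly as a level set of the distance function to $\mathcal C_u\Omega$, after mollification and Sard.

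There is a real gap in your route that the Riemann-map argument elegantly avoids: you need the relevant piece of $\{\tilde\phi=c\}$ to be a \emph{single} Jordan curve whose interior contains \emph{all} of $K$, and the justification you offer — that $\C\setminus\mathcal S_\varepsilon\supset U$ is connected, or that $\mathcal S_\varepsilon$ deformation retracts to a point — does not deliver this. After Sard, $\{\tilde\phi=c\}$ is a disjoint union of smooth Jordan curves $\gamma_1,\dots,\gamma_n$; if $\mathcal S\Omega$ has thin ``necks'' (e.g.\ a dumbbell), the super-level set $\{\tilde\phi>c\}$ can split into several components for every admissible $c$, and a disconnected $K$ may span several of them, so ``the component containing $K$'' is not even well defined. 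Connectedness of the complement only tells you each component of $\{\tilde\phi>c\}$ is simply connected; it does not hand you a single outer $\gamma_i$ that surrounds all of $K$ and lies in $\Omega$. To make your construction work you would need an additional argument: for instance, take $V$ to be the unbounded component of $\{\tilde\phi<c\}$, show that $\partial V$ consists of a single Jordan curve (this is where the topology is nontrivial, since in principle $V$ could fail to be simply connected in $\hat\C$), and then verify $K\subset\hat\C\setminus\overline V$. None of this is impossible, but it is exactly the bookkeeping the conformal map trivializes: a circle has no bookkeeping. I would also flag that ``$\{\phi=c\}\subset\Omega$'' does not follow merely from being in $\overline\Omega$ at positive distance from both $\mathcal C_b\Omega$ and $\mathcal C_u\Omega$ — a point of $\partial\Omega\setminus\partial\mathcal C_u\Omega$ (think of a slit inside $\Omega$) satisfies those distance bounds but is not in $\Omega$, and this is not repaired by perturbing $c$; the paper's proof has the same implicit regularity assumption on $\Omega$ at the step $\C\setminus\Omega=\overline{\mathcal C_b\Omega}\cup\overline{\mathcal C_u\Omega}$, so this particular issue is shared rather than a defect of your approach alone, but it is worth being aware of.
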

\begin{proof}
By the Riemann mapping theorem there exists a conformal map $\phi:\mathcal S\Omega\to\mathbb D$ such that for some $k_0 \in K$, $\phi(k_0)=0$, where $\mathbb D$ is the unit disk in $\C$. Under the assumption ${\rm dist}(\mathcal C_{B}\Omega,\mathcal C_{U}\Omega)>0$, there exists $r \in (0,1)$ such that 
	$$
	C_r:=\{ z\in \mathbb D: |z|>r\}\subset \phi(\Omega).
	$$
	Indeed, if this is not true there exist a sequence $\{y_n\}\subset \mathbb D$ such that $|y_n|\to1$ and $y_n\notin \phi( \Omega)$ for $n\in\{0,1,2,...\}$. Being the set $\{x_n:=\phi^{-1}(y_n)\}$ bounded, there exists $ (x_m)_m$, a sequence  in $\mathcal S\Omega$ 	such that $x_m\to x\in\overline{\mathcal{S}\Omega}$. Since $|\phi(x_m)|\to 1$ we see that $x\in\partial (\mathcal S\Omega)$. Next, since 	${\rm dist}(C_{B}\Omega,\mathcal C_{U}\Omega)>0$ one can see that $\C\setminus\Omega=\overline{ {\mathcal C}_B \Omega} \cup \overline {{\mathcal C}_U\Omega}$.
	
	By the definition of $\mathcal S\Omega$, if $x_m\in \overline {{\mathcal C}_U\Omega}={\mathcal C}_U\Omega\cup \partial {\mathcal C}_U\Omega$  then $x_m\notin\mathcal S\Omega$ which is not possible. On the other side $x_m\in \overline {{\mathcal C}_B\Omega}$ implies that $x\in \overline {{\mathcal C}_B\Omega}$. But we already know that $x\in\partial (\mathcal S\Omega)=\partial {\mathcal C}_U\Omega$  and $ {\rm dist }(\partial {\mathcal C}_U\Omega,\overline {{\mathcal C}_B\Omega})>0$. Its gives again a contradiction.

	Now, take $\delta\in (r,1)$ such that for any $z\in \phi(K)$, $|z|<\delta$. Set the curve in $\Omega$ given by $\Gamma:=\phi^{-1}(\{z:|z|=\delta \})	$. Since $\phi(k_0)=0$ for a point $k_0 \in K$, we immediately obtain that $K$ is in the interior part of $\Gamma$. \end{proof}

%\subsection{Example for Proposition \ref{prop_inf} }\label{example P}
%
%The following is a simple example of a function satisfying  the conditions of Proposition \ref{prop_inf}\emph{(2)}.
%\begin{proposition}
%There exists a function $G\in\C^\infty_0(\R^2)$ such that $G\geq 0$ and $G+\Delta G\geq 0$.
%\end{proposition}
%\begin{proof}
%Let $g:\R\to[0,\infty)$ given by
%$$
%g(r)=\begin{cases} \int_r^2 e^{\frac1{s(s-2)}}ds & s\in (0,2) \\
%0&s\notin   (0,2),
%\end{cases}
%$$
%and  $F:\R^2\to\R$ defined by 
%$$
%F(x)=\begin{cases} g(|x|) & |x|\in (0,2) \\
%0&|x|\notin   (0,2).
%\end{cases}
%$$
%In polar coordinates $\Delta F=g''(r)+\frac1rg'(r)=\Big(\frac{2(r-1)}{r^2(r-2)^2}-\frac1r\Big)e^{\frac1{r(r-2)}}$. Then, there exists $\eta \in (1,2)$ such that for $|x|\in(\eta,2)$,
%\bel{24juin24}
%\Delta F(x) >0.
%\ee
%Now, for $\delta>0$ define $G(x)=F(\delta x)$. Take $\delta$ such that
%$$
%\delta^2 \| g''\|_\infty <\frac{g(\eta)}2;\qquad  \delta \| \frac{g'}r\|_\infty <\frac{g(\eta)}2.
%$$
%Then it follows that $ \Delta G(x)+G(x)>0$ for $\delta |x|<\eta$, and in \eqref{24juin24} the same holds for $2>\delta |x|>\eta$. \end{proof}
%
%
%

{\bf Sources of funding}. 
This work has received the financial support from the French government in the framework of the France 2030 programme IdEx universit\'e de Bordeaux. 
Pablo Miranda was partially supported by Fondecyt grant 1241983. Vincent Bruneau was partially supported by ANR grant ANR-24-CE40-2939-01.

{\bf Acknowledgements} The authors thank Jean-Fran\c cois  Bony and Galina Levitina for useful discussions and ideas.

{\sc Vincent Bruneau}\\ Institut de Math\'ematiques de Bordeaux, UMR  CNRS 5251\\
Universit\'e de Bordeaux\\ 351 cours de la Lib\'eration, 33405 Talence cedex, France\\
E-mail: vbruneau@math.u-bordeaux.fr\\

{\sc Pablo Miranda}\\ Departamento de Matem\'atica y Ciencia de la Computaci\'on\\
Universidad de Santiago de Chile\\ Las Sophoras 173, Estaci\'on Central, Santiago, Chile\\
E-mail: pablo.miranda.r@usach.cl\\

\end{document}